  \let\mathbbm\mathbb
\newcommand{\newsstheorem}[2]{
  \newaliascnt{#1}{dummy}
  \newtheorem{#1}[#1]{#2}
  \aliascntresetthe{#1}
  \expandafter\def\csname #1autorefname\endcsname{#2}
}
\theoremstyle{plain}
\theoremstyle{definition}
\theoremstyle{remark}
\setlist[enumerate,1]{label={(\roman*)}}
\setlist[enumerate,2]{label={(\alph*)}}
\setlist[enumerate,3]{label={(\Roman*)}}
\newcommand*\printandhtml[1]{%
  #1%
  \CustomizeMathJax{#1}%
}
  \newcommand\dd{\mathrm{d}}
  \newcommand\RR{\mathbb{R}}
  \newcommand\PP{\mathbb{P}}
  \newcommand\EE{\mathbb{E}}
  \DeclareMathOperator{\sgn}{sgn}
  \newcommand\rhoh{\hat{\rho}}
  \newcommand\HG{{\protect\vphantom{F}}_{2}F_1}
  \DeclareMathOperator{\tRe}{Re}
\newcommand\Ind{\mathbbm{1}}
  \newcommand\Ind{\mathbb{1}}
  \newcommand\MoveEqLeft{} 
  \newcommand\mathbbm[1]{\mathbb{#1}}
\providecommand{\email}[1]{\href{mailto:#1}{\nolinkurl{#1}}}
\title{Optimal stopping of the stable process with state-dependent killing}
\author{%
  K.\ van Schaik\footnote{University of Manchester, UK.
  \email{kees.vanschaik@manchester.ac.uk}} 
  \and
  A.\ R.\ Watson\footnote{University College London, UK. 
  \email{alexander.watson@ucl.ac.uk}}
  \and
  X.\ Xu\footnote{Xi'an Jiaotong-Liverpool University, PR China.
  \email{Xin.Xu03@xjtlu.edu.cn}}%
}
\date{\today}
\begin{document}
\maketitle

\begin{abstract}
  We describe the solution of an optimal stopping problem for a stable
  Lévy process killed at state-dependent rate.
  The killing rate is chosen in such a way that the killed process
  remains self-similar, and the solution to the optimal stopping
  problem is obtained by characterising a self-similar Markov process
  associated with the stable process.
  The optimal stopping strategy is to stop upon first passage into
  an interval, found explicitly in terms of the parameters of the model.

  \medskip
  {\small
    \noindent
    \emph{2020 Mathematics Subject Classification.}
    60G40, 60G51.
  }
\end{abstract}

\section{Introduction}

Consider a stable Lévy process $X = (X_t)_{t\ge 0}$ 
with index $\alpha\in (0,2)$, to which we
introduce a state-dependent killing, occuring at rate $\omega(X_t)$ at time
$t$, where
\begin{equation*}
  \omega(x)
  =
  \begin{cases}
    k(-x)^{-\alpha}, &x< 0\\
    0, &x\ge 0,
  \end{cases}
\end{equation*}
for some parameter $k > 0$.
Denote the random killing time by $T$, and the killed process by
\[
  X^\dag_t = X_t \mathbbm{1}_{\{t<T\}}.
\]
We consider the gain function
\begin{equation*}
  g(x)
  =
  \begin{cases}
    (x^{r}-K)^{+}, & x > 0\\
    0, & x \le 0,
  \end{cases}
\end{equation*}
for some $r \in \RR \setminus \{0\}$ and $K > 0$, and wish to solve the optimal
stopping problem
\begin{align}
  v(x)
  =
  \sup_{\tau} 
  \mathbb{E}_{x}\left [
    g(X^\dag_{\tau})
  \right ],
  \label{ogp4X}
\end{align}
where the supremum is over all (possibly infinite) stopping times $\tau$ for
the natural enlargement of the filtration of $X^\dag$, and the probability
measure $\PP_x$ with expectation $\EE_x$ indicates that $X_0 = x$.

Optimal stopping problems for Lévy processes have been studied for many
years, but the introduction of state-dependent killing brings new complexity to the
problem. \citet{ZH} have considered a similar question for general Lévy
processes without upward jumps (and with a different gain function $g$ and
killing rate $\omega$). They applied classical techniques in novel ways to
characterise a stopping region which has quite a complex form.  We take a
very different approach: the particular form of $\omega$ means that the process
$X^\dag$ inherits from $X$ the self-similarity property characteristic of
stable processes, and we show that $X^\dag$ has an intimate relationship with a
member of a newly-discovered class of Lévy processes, the double hypergeometric
processes.

This relationship makes it possible to find a surprisingly explicit solution
to the optimal stopping problem: it is optimal to stop when $X^\dag$ enters an
interval, the bounds of which can be given explicitly in terms of the parameters
of the model. This is outlined in the theorem below, which is the main result
of this work and which we initially present only in its broad strokes.
\begin{theorem}\label{t:main}
  There exists $\delta > \max(0,\alpha-1)$, uniquely characterised
  in terms of the parameters of the stable process and the killing
  coefficient $k$, such that the following holds for the process
  $X$ started in $x \ne 0$.
  \begin{enumerate}
    \item 
      When $0< r<\delta$, the solution of the optimal stopping
      problem \eqref{ogp4X} is given by the upwards first passage time
      \[ \tau^{*}\coloneqq \inf \{t\geq 0: X_{t}\geq b^{*}\}, \]
      where $b^*$ can be found explicitly.
    \item When $-(\delta - \alpha + 1)<r<0$,
      the solution of the optimal stopping problem \eqref{ogp4X} is given
      by the first entrance time
      \[ \tau^{*}\coloneqq \inf \{t\geq 0: 0<X_{t}\leq 1/b^*\} \]
      where
      $b^*$ can again be found explicitly.
    \item When
      $r<-(\delta-\alpha+1)$ or $r>\delta$,
      $v(x)=\infty$ for all $x \ne 0$.
  \end{enumerate}
\end{theorem}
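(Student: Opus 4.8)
The plan is to exploit the self-similarity that the state-dependent killing is designed to preserve. Since $\omega(\lambda x) = \lambda^{-\alpha}\omega(x)$, the killed process $X$ together with its lifetime $T$ is a real self-similar Markov process of index $\alpha$, so by the Lamperti--Kiu transform it is encoded by a Markov additive process $(\xi, J)$ on $\RR \times \{+,-\}$, the killing contributing an additional killing rate in the modulating state $J = -$ (that is, while $X < 0$). First I would make this correspondence explicit, starting from the known Lamperti-stable MAP and incorporating the $k$-dependent killing. Self-similarity then collapses the two-parameter family of problems (over the starting point $x$ and the strike $K$) to a one-dimensional analysis: writing $\tau_b = \inf\{t : X_t \geq b\}$, scaling gives $\EE_x[h(X_{\tau_b})\Ind_{\{\tau_b < T\}}] = \EE_{x/b}[h(b X_{\tau_1})\Ind_{\{\tau_1 < T\}}]$, so it suffices to understand first passage over the single level $1$.

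Next I would define and characterise the exponent $\delta$. Writing $\psi(\gamma)$ for the leading eigenvalue of the matrix Laplace exponent of $(\xi, J)$, I would take $\delta$ to be the largest $\gamma$ for which $x \mapsto x^\gamma$ is excessive for the killed semigroup on $(0,\infty)$, equivalently the relevant root of $\psi(\gamma) = 0$; this produces an explicit transcendental equation in $\alpha$, $k$ and the positivity parameter $\rho$ of $X$, and the constraint $\delta > \max(0, \alpha - 1)$ should follow from the location of this root. The trichotomy is then governed by convergence of the Mellin-type functional $\EE_x[X_{\tau_b}^r \Ind_{\{\tau_b < T\}}]$: I expect it to be finite exactly when $r < \delta$ and to diverge when $r > \delta$, which delivers the infinite-value branch of part (iii).

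For parts (i) and (ii) I would run a guess-and-verify scheme. When $0 < r < \delta$ the gain is increasing, so the candidate is to stop at $\tau^* = \tau_{b^*}$; I would evaluate $V(x) = \EE_x[(X_{\tau_b}^r - K)\Ind_{\{\tau_b < T\}}]$ in closed form from the overshoot law of the killed MAP, optimise over $b$ to pin down $b^*$ through the first-order/smooth-fit condition, and then verify optimality by checking that $V$ is excessive, dominates $g$, and is attained by $\tau_{b^*}$. Part (ii), where $r < 0$ and the gain blows up as $x \downarrow 0$, I would reduce to part (i) via the Riesz--Bogdan--\.{Z}ak inversion $x \mapsto 1/x$: after the associated $h$-transform (with $h(x) = x^{\alpha-1}$) and time change this maps the stable process to a stable process, sends first entrance into $(0, 1/b^*]$ to first passage above $b^*$, and acts on exponents by the reflection $\gamma \mapsto \alpha - 1 - \gamma$. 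This simultaneously transforms the admissible range $0 < r < \delta$ into $-(\delta - \alpha + 1) < r < 0$, explains the shifted exponent $\delta - \alpha + 1$, and yields the remaining divergence in part (iii).

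The main obstacle will be the precise characterisation of the associated self-similar Markov process and of $\delta$: identifying the killed MAP, computing $\psi$ and the exact first-passage functional $\EE_x[X_{\tau_b}^r \Ind_{\{\tau_b < T\}}]$ with the killing present, and proving that its abscissa of convergence is exactly $\delta$. Once these explicit quantities and the integrability threshold are in hand, the optimisation for $b^*$ and the excessive-majorant verification are comparatively routine, and the inversion duality transfers everything to the negative-$r$ regime.
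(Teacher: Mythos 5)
Your overall strategy --- self-similarity, an exponent $\delta$ defined as an integrability threshold, reduction to first passage over a single level, and the Riesz--Bogdan--\.{Z}ak inversion for $r<0$ --- matches the paper in spirit, but the central technical step you rely on does not go through. You propose to encode the killed process as a Markov additive process $(\xi,J)$ on $\RR\times\{\pm 1\}$ via the Lamperti--Kiu transform and then to extract the overshoot law $\EE_x[X_{\tau_b}^r\Ind_{\{\tau_b<T\}}]$ from the fluctuation theory of this MAP. The matrix exponent of the killed MAP is indeed explicit (one adds $k$ to the diagonal entry for the state $J=-1$), but its matrix Wiener--Hopf factorisation --- which is what you would need for the first-passage/overshoot law and for your excessive-majorant verification --- is not known once the killing depends on the modulating state; the paper says exactly this in its final section, where the MAP route is considered (and set aside) for the put-type gain. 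Without that factorisation, the steps you call ``comparatively routine'' cannot be carried out, and the verification genuinely requires two-sided fluctuation identities because superharmonicity must be checked on the whole continuation region, including $(-\infty,0)$.

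The idea you are missing is that the gain function vanishes on $(-\infty,0)$, so one may \emph{path-censor}: erase the negative excursions of the killed process and glue the nonnegative pieces together. This yields a positive self-similar Markov process $Y$ whose first passage above $b$ agrees with that of $X$ on the event of survival, and whose Lamperti transform is a genuine scalar L\'evy process, identified in the paper as a member of the double hypergeometric class with fully explicit Wiener--Hopf factors $\kappa,\hat\kappa$. The exponent $\delta$ is then the first positive zero of the scalar characteristic exponent (given by an explicit trigonometric equation), $b^*$ comes from Mordecki's solution of the perpetual call/put for L\'evy processes rather than from a smooth-fit computation, and a separate argument --- treating the two-dimensional problem for $(X,A)$ with general optimal stopping theory --- shows that the optimal stopping set may be taken inside $(0,\infty)$ and that the richer filtration of $X$ confers no advantage over that of the censored process; your sketch passes over this point. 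Your characterisation of $\delta$ via the leading eigenvalue of the MAP exponent would plausibly identify the same number, but it does not by itself produce the explicit value function or $b^*$.
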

In both instances, the quantity $b^*$ can be found in terms of the parameters
of the stable process, the killing parameter $k$ and the parameters
$r,K$ of the gain function,
and there is an explicit expression for $\delta$.
The full
version of this result appears as \autoref{t:os-X-full}, once we have
introduced the necessary notation, and an integral expression for
$v$ is given in \autoref{r:int}.

This work has two purposes: firstly, to describe the solution of an optimal
stopping problem for a stable Lévy process with an omega-clock;
and secondly, to explicitly characterise a new self-similar Markov process,
the killed path-censored stable process.

In the context of stochastic optimal control, the omega-clock was introduced by
\citet{Al}, who studied the dividend problem.  Since then, the model has
enjoyed some popularity as a way to account for bankruptcy: if $X$ represents
the capital of some company, then since the killing rate is only positive when
the process itself is negative, the killing time can be seen as a non-immediate
ruin event. One could also consider $1/X$ to be the capital process (in which
case $\omega$ is decreasing in the capital level), or
indeed look at a time-change of one of these processes by some additive
functional.  The study of the ruin behaviour of risk processes with omega-clock
has been very successful \cite{G,Al2,Ka,Li,Cza}.  Optimal stopping problems
have been considered for this process in the case of Brownian motion
\cite{Kuhn} and of general spectrally negative Lévy processes \cite{ZH}, which
is the situation closest to ours. In \cite{ZH}, \citeauthor{ZH} consider an
American call option with the underlying equal to the exponential of a
spectrally negative Lévy processs, and with an omega-clock function $\omega(x)
= k\Ind_{(-\infty,0)}(x)$ for some $k\ge 0$.  Without the omega-clock, this
problem was solved by \citet{ME}, but with the clock, it becomes substantially
harder, and the authors show that it is optimal to stop upon first passage into
the union of a half-line and a (possibly empty) compact interval.  The main
differences in the present work are that the underlying is a Lévy process (not
its exponential), that the omega-clock function is different in form, and that
the process has both upward and downward jumps.

There is a long tradition of studying stable processes using a transformation
into a positive self-similar Markov process (pssMp); as examples, we mention
the stable process killed on going below zero and its conditionings \cite{CC},
censored \cite{BBC-cens} and path-censored stable processes \cite{KPW} and the
radial part of an isotropic stable process \cite{CPP-explicit}.  Our first step
in dealing with this process with omega-clock is to cut out the excursions
below zero, and this gives rise to a new pssMp, the killed path-censored stable
process, which we describe in explicit detail by relating it to a member of the
double hypergeometric class of Lévy processes.  This allows us to obtain the
solution to the optimal stopping problem.  The relation with a pssMp also
explains the form of the omega-clock function $\omega$, which ensures that the
self-similarity property of the stable process is maintained after killing.

The remainder of the paper is structured as follows. In \autoref{s:kpcsp}, we
construct a pssMp by transforming the path of the killed stable process, and we
apply the Lamperti transform to obtain a new Lévy process from this.  In
\autoref{s:os-lamperti}, we show that the L\'evy process defined in the
preceding section belongs to the double hypergeometric class and solve an
auxiliary optimal stopping problem based on it.  Finally, \autoref{s:os-sol}
provides the proof of the main results of the paper.  A short concluding
Section~\ref{s:variants} offers comments on extensions of this work.

\section{The killed path-censored stable process and its Lamperti transform}
\label{s:kpcsp}

Since the gain function $g$ in the optimal stopping problem \eqref{ogp4X}
is zero on $(-\infty,0)$, it is natural to consider removing the path
sections of $X$ where it is negative. In this section, we will show that
this gives rise to a positive self-similar Markov process, and identify
its distribution using the Lamperti transform.

We begin with a formal presentation of Lévy processes, stable processes,
the omega-clock and the theory of self-similarity. This leads to a
description of a new positive self-similar Markov process $Y$,
the killed path-censored stable process, which we then characterise
and which will be vital to our solution, in \autoref{s:os-lamperti}, of an auxiliary
optimal stopping problem.
The process $Y$ was described briefly in the case of a symmetric stable process
in \cite{DHG}, citing
communication with the second author of this article, and here
we offer a full characterisation in the general case.

A process $\xi=(\xi_t)_{t\geq 0}$ is called a (killed) L\'evy
process if it has state space $\mathbb{R}\cup \{\partial \}$,
càdlàg paths and stationary, independent increments.
Such a process is
characterised by the \emph{L\'evy-Khintchine formula}, which states that for
all $\theta \in \mathbb{R}$, the characteristic exponent given by
$e^{-t\Psi(\theta)}=\mathbb{E}\left [e^{i\theta \xi_{t}}\right ]$ satisfies
\begin{align}
  \Psi(\theta)
  =
  ia\theta
  +\frac{1}{2}\sigma^{2}\theta^{2}
  +\int_{\mathbb{R}}\left (1-e^{i\theta x}+i\theta x\mathbbm{1}_{\{|x|<1\}}\right )\Pi (\mathrm{d}x)
  +q, \label{psixi}
\end{align}
where $q\geq 0$ is the killing rate, $a\in \mathbb{R}$ is the linear
coefficient, $\sigma\geq 0$ is the Gaussian coefficient and $\Pi$ is the L\'evy
measure, concentrated on $\mathbb{R}\backslash \{0\}$ and such that
$\int_{\mathbb{R}}(1\wedge x^{2})\Pi(\mathrm{d}x)<\infty$.
When $q>0$,
the process $\xi$ is sent to the cemetery state $\partial$ at an exponential random time
with rate $q$, which is otherwise independent of the path of $\xi$,
and remains at $\partial$ forever.
We write $\PP_x$ for the law of the process
started from $x$, and we will retain this notation for other
stochastic processes wherever this is unambiguous.

A Lévy process $X=(X_{t})_{t\geq 0}$ 
is called a \emph{stable process} if it enjoys the \emph{scaling property},
namely, that when started from $X_0=0$, the process
$(cX_{tc^{-\alpha}})_{t\geq 0}$ has the same law as $X$ for
any $c>0$. The parameter $\alpha \in (0,2]$ is called the \emph{index} of $X$.
Stable processes can be described in terms of their Lévy-Khintchine formula
as follows
\cite[\S 1.2.6 and \S 6.5.3]{Kyprianou}.
The L\'evy measure $\Pi$ is absolutely continuous with density given
by
\[
  c_{+}x^{-(\alpha+1)}\mathbbm{1}_{\{x>0\}}+c_{-}|x|^{-(\alpha+1)}\mathbbm{1}_{\{x<0\}},
  \quad x\in \mathbb{R},
\]
where 
\[
  c_{+}
  =
  \frac{\Gamma(\alpha+1)}{\Gamma(\alpha \rho)\Gamma(1-\alpha \rho)}
  \text{ and }
  c_{-}
  =
  \frac{\Gamma(\alpha+1)}{\Gamma(\alpha \hat{\rho})\Gamma(1-\alpha \hat{\rho})}.
\]
The parameter $\rho$ is called the \emph{positivity parameter} of $X$
and satisfies $\rho=\mathbb{P}_0(X_{t}\geq 0)$ for all $t>0$.
For convenience, we write $\hat{\rho} = 1-\rho$.
We restrict ourselves to the following set of admissible parameters:
\begin{equation*}
  \mathscr{A}_{st}=\{(\alpha, \rho): \alpha\in (0,1), \rho\in (0,1)\}
  \cup \{(\alpha, \rho):\alpha \in(1,2), \rho\in (1-\frac{1}{\alpha}, \frac{1}{\alpha})\}\cup \{(\alpha, \rho)=(1,\frac{1}{2})\},
\end{equation*}
which encompasses (up to a multiplicative factor in their spatial scale)
all stable processes
with the exception of Brownian motion, processes jumping only in
one direction and symmetric Cauchy processes with non-zero drift.

If $E$ is either $\RR$ or $[0,\infty)$, then
an $E$-valued standard Markov process $Y$
(in the sense of \cite{BG-mp})
is said to be an \emph{E-self-similar Markov process}
if, for some fixed $\alpha>0$ and all $y\in E$ and $c>0$,
\begin{align}
  \text{the law of }
  \left (cY_{c^{-\alpha}t}\right )_{t\geq 0}
  \text{ under }
  \mathbb{P}_{y}
  \text{ is }
  \mathbb{P}_{cy}.
  \label{e:scaling}
\end{align}
When $E = [0,\infty)$, such a process is called a
\emph{positive self-similar Markov process (pssMp)}, and
when $E = \RR$, it is called a \emph{real self-similar Markov process (rssMp)}.
The stable process $X$ is an rssMp.

We are now ready to describe the omega-clock killing of the stable process,
and our procedure for removing its path sections lying in $(-\infty,0)$.

Let $X$ be a stable process.
Due to self-similarity, we can regard the measures $\PP_x$
as being defined by 
$\EE_x[F(X_t, t\ge 0)] = \EE_{\sgn x}[F(\lvert x\rvert X_{t\lvert x\rvert^{-\alpha}}, t\ge 0)]$
for measurable functionals $F$ and $x\in\RR\setminus \{0\}$; this will be convenient at times.
Denote
by $\mathbb{F} = (\mathscr{F}_t)_{t\ge 0}$
the natural enlargement of the filtration $(\sigma(X_s, s\le t))_{t\ge 0}$
in the sense of \citet[Definition~1.3.38]{Bic}.
Note in particular that the enlargement does not depend on the initial
value of $X$.
Define the positive
continuous additive functional
\[
  A_t = \int_0^t \omega(X_s) \, \dd s, \quad t \ge 0,
\]
where $\omega$ is as defined in the introduction.
Let $\mathrm{e}_1$ be an exponential random variable of rate $1$,
independent of $X$, and define the omega-clock killing time
by 
\[
  T = \inf\{ t \ge 0: A_t > \mathrm{e}_1 \}.
\]
The \emph{omega-killed stable process} is given by
\[
  X^\dag_t
  =
  X_t \Ind_{\{t < T\}}, \quad t\ge 0.
\]
We note that here, $0$ functions as a cemetery state for $X^\dag$,
which is a common convention with ssMps.

\begin{lemma}
  \label{l:standard}
  The process $X^\dag$ is a standard Markov process in the filtration
  $\mathbb{F}^\dag$, the natural enlargement of $(\sigma(X^\dag_s, s\le t))_{t\ge 0}$.
\end{lemma}
\begin{proof}
  We give a sketch of the proof, since the argument follows familiar lines.
  Recall that $T<T_0$ almost surely, which implies that,
  for a random time $\tau$, we have $\{ T \le \tau \} = \{X^\dag_\tau = 0 \}$.
  The strong Markov property of $X^\dag$ at the stopping time $\tau$
  can be proved by partitioning on the event $\{X^\dag_\tau = 0\}$ and using
  the corresponding property of $X$ and the definition of $T$.
  Next, the bounded convergence theorem implies that, for any measurable, bounded
  $f$, $\lim_{t\to 0} \EE_x[f(X^\dag_t)] = f(x)$. This and the strong
  Markov property are the only facts required to prove the quasi-left-continuity
  of $X^\dag$, by reproducing the proof of \cite[Lemma~3.2]{Kyprianou}.
  This completes the proof.
\end{proof}

Let $C=\left (C_{t}\right )_{t\geq 0}$ be given by
\[
  C_{t}=\int_{0}^{t}\mathbbm{1}_{\{X^\dag_{s}\ge 0\}}\mathrm{d}s,
  \quad t\ge 0,
\]
and call its right-continuous inverse $C^{-1}$.
$C$ counts the time that $X^\dag$ spends above $0$.
The \emph{killed path-censored stable process} $Y$ is the stochastic process
\[
  Y_{t}
  =
  X^\dag_{C_{t}^{-1}},
  \quad t\ge 0.
\]
The effect of the Markov time-change $C^{-1}$ is to erase
the negative components of $X^\dag$ and glue the non-negative parts
together at the endpoints, up until the time that $X^\dag$ is killed
during one of these negative components.

The role of $0$ deserves special attention.
Write $T_0 = \inf\{t \ge 0: X_t = 0 \}$ for its hitting time.
When $\alpha \le 1$, the process $X$ cannot hit zero,
whereas when $\alpha > 1$, $T_0$ is finite almost surely.
Even in the latter case, $A_{T_0} = \infty$ a.s., and this can be seen
by noting that $k^{-1} A_{T_0}$ is the lifetime of the Lamperti
representation of the path-censored stable process formed from
$-X$, which by \cite[Lemma~13.3]{Kyp} is infinite.
Therefore, $X^\dag$ and $Y$ are always killed before reaching zero.

Moreover, regardless of the value of $\alpha$, when the process
$X$ is started from zero, it is killed immediately.
We can see this is follows. Fix $t$; due to scaling, the distribution
of $k^{-1} A_{t}$ under $\PP_x$ converges, as $x\to 0$, to
that of $k^{-1} A_{T_0}$ under $\PP_1$ (recalling that $T_0<\infty$ iff $\alpha>1$,
and the discussion above).
Applying \cite[Lemma 13.3]{Kyp} again, we see that $A_t=\infty$ a.s.\ for
any $t>0$ when $X_0=0$.
In conclusion, we can
regard $0$ as an absorbing state for $X^\dag$ and $Y$,
which is consistent
with the convention for ssMps mentioned above.

\begin{proposition} 
  \label{p:ssmp}
  The process $Y$ is a positive self-similar Markov process 
  with respect to the filtration 
  $\mathbb{F}^\dag\circ C^{-1} = (\mathscr{F}^\dag_{C^{-1}_t})_{t\ge 0}$.
\end{proposition}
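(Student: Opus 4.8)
The plan is to establish separately the two defining features of a pssMp: first, that $Y$ is a standard Markov process on $[0,\infty)$ with $0$ absorbing, realised as a time-change of a strong Markov process; and second, that $Y$ satisfies the scaling relation \eqref{e:scaling}. The Markov property will come from general time-change theory, while the scaling will follow from a direct computation exploiting the fact that the omega-clock function $\omega$ is homogeneous of degree $-\alpha$, which is exactly the feature that makes the construction self-similar.

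For the Markov property, I would first observe that $X^\dag$ is the stable process $X$ killed by the continuous additive functional $A$ (equivalently, by the multiplicative functional $\mathrm{e}^{-A_t}$ together with the independent clock $\mathrm{e}_1$), so that $X^\dag$ retains the strong Markov property inherited from the Hunt process $X$. Since $C_t = \int_0^t \Ind_{\{X^\dag_s \ge 0\}}\,\dd s$ is a continuous additive functional of $X^\dag$, its right-continuous inverse $C^{-1}$ defines a time-change, and the classical random time-change theorem for strong Markov processes gives that $Y_t = X^\dag_{C^{-1}_t}$ is strong Markov with respect to the time-changed filtration $\mathbb{F}\circ C^{-1}$. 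It then remains to identify the state space and the role of $0$: the functional $C$ increases only while $X^\dag \ge 0$, so $Y$ takes values in $[0,\infty)$, the flat stretches of $C$ (corresponding to the negative excursions of $X^\dag$) being erased by the inverse; and after the killing time $T$ one has $X^\dag \equiv 0$, so $C$ increases at unit rate thereafter and $Y$ is absorbed at $0$, consistent with $0$ serving as the cemetery.

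For the scaling property, fix $c > 0$ and $y \ge 0$ and work under $\PP_y$. The engine is the identity $\omega(cx) = c^{-\alpha}\omega(x)$ for all $x$, immediate from the definition of $\omega$. Writing $X'_t = cX_{c^{-\alpha}t}$, the substitution $u = c^{-\alpha}s$ gives
\[
  \int_0^t \omega(X'_s)\,\dd s
  = \int_0^t c^{-\alpha}\omega\bigl(X_{c^{-\alpha}s}\bigr)\,\dd s
  = \int_0^{c^{-\alpha}t}\omega(X_u)\,\dd u
  = A_{c^{-\alpha}t},
\]
so the killing time of the rescaled path is $c^\alpha T$ and the rescaled killed process equals $cX^\dag_{c^{-\alpha}t}$; by the scaling property of the rssMp $X$ and the independence of $\mathrm{e}_1$, this has, under $\PP_y$, the law of $X^\dag$ under $\PP_{cy}$. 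The same substitution applied to the censoring clock of the rescaled process yields $C'_t = c^\alpha C_{c^{-\alpha}t}$, hence $(C')^{-1}_t = c^\alpha C^{-1}_{c^{-\alpha}t}$ for its inverse; composing, the censored process of $cX^\dag_{c^{-\alpha}\cdot}$ is exactly $(cY_{c^{-\alpha}t})_{t\ge 0}$. Combining these two observations shows that $(cY_{c^{-\alpha}t})_{t\ge 0}$ under $\PP_y$ has the law of $Y$ under $\PP_{cy}$, which is \eqref{e:scaling}.

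I expect the scaling computation to be essentially routine once the homogeneity of $\omega$ is in hand; the step requiring genuine care is the Markov and standard-process claim. Concretely, one must check the hypotheses of the time-change theorem (that $C$ is a bona fide continuous additive functional, that $C^{-1}$ is finite and right-continuous, and that the regularity conditions—right-continuity of paths and quasi-left-continuity—required of a standard process survive the time-change), and in particular verify that absorption at $0$ caused by killing during a negative excursion is handled consistently with the cemetery convention. This parallels the treatment of the path-censored stable process in \cite{KPW} for the case $k = 0$, the novelty here being the interaction of the time-change with the state-dependent killing.
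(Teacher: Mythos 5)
Your proposal is correct and follows essentially the same route as the paper: the Markov property is delegated to the general time-change theory for Markov processes, and the scaling property is proved in the same two stages (first that the homogeneity $\omega(cx)=c^{-\alpha}\omega(x)$ gives $\widetilde{A}_t = A_{c^{-\alpha}t}$ and hence self-similarity of $X^\dag$, then that $\tilde{C}^{-1}_t = c^\alpha C^{-1}_{c^{-\alpha}t}$ transfers this to $Y$). The extra care you flag about verifying the hypotheses of the time-change theorem and the cemetery convention at $0$ is a sensible elaboration of what the paper handles by citation, not a different argument.
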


\begin{proof}
  To prove the fact that $Y$ is a standard Markov process, we use \cite{BG-mp},
  specifically the remark following Chapter~V, Proposition~4.11.
  This treats $Y$ as a time-change of $X^\dag$, and one ingredient we require
  is that $X^\dag$ has a reference measure. This is a measure $\mu$ such
  that a set $B\subset\RR$ is null for $\mu$ if and only if it has zero
  potential (meaning in \cite[\S II.3]{BG-mp} that, for some $q\ge 0$ and every $x\in\RR$,
  $U^\dag_q(x,B) \coloneqq \int_0^\infty e^{-qt} \PP_x(X^\dag_t\in B) \, \dd t = 0$).
  We will prove that $\mu(\dd x) = \delta_0(\dd x) + \dd x$, the sum of a Dirac
  mass at zero and Lebesgue measure, is a reference measure for $X^\dag$.
  Fix $q>0$. A short calculation shows that
  \begin{equation}
    U^\dag_q(x,B)
    = \EE_x\biggl[ \int_0^\infty e^{-qt-A_t} \Ind_{\{X_t\in B\}}\, \dd t + (1-e^{-A_t})\delta_0(B) \biggr].
    \label{e:Udag}
  \end{equation}
  From this we see that there exists $a>0$ such that for all $x$ and $B$,
  $U^\dag_q(x,B) \le a(U_q(x,B) + \delta_0(B))$, where $U_q$ is the $q$-potential of
  $X$. Since for every $t>0$, $X_t$ is absolutely continuous with respect to Lebesgue
  measure and has support $\RR$, it follows that $U_q$ is equivalent to Lebesgue measure,
  and thence that $U^\dag_q$ is absolutely continuous with respect to some scalar
  multiple of $\mu$.

  On the other hand, take a measurable set $B\subset \RR$ such that $\mu(B)>0$,
  and fix $x\ne 0$.
  If $\delta_0(B) > 0$, then by \eqref{e:Udag}, $U^\dag_q(x,B)>0$ as well.
  If not, then $B$ has positive Lebesgue measure, and then $U_q(x,B) > 0$ by the
  discussion above. But this implies that $X$ has positive probability of
  reaching $B$ before an exponential time of rate $q$; and $X^\dag$ has positive
  probability of not being killed before reaching $B$. Therefore, $U^\dag_q(x,B) > 0$.
  This completes the proof that $\mu$ is a reference measure for $X^\dag$.

  We now return to the result of \cite{BG-mp}. $Y$ is the time-change of $X^\dag$
  by the continuous additive functional $C$. $X^\dag$ is a standard process
  with a reference measure and $C$ has support $[0,\infty)$, which is closed
  in $\RR$. This is enough to conclude that $Y$ is a standard process.

  We prove the scaling property in two steps: first, we show that
  $X^\dag$ is self-similar, and then that $Y$ inherits this property.

  \begin{enumerate}[label={\arabic*.}]
    \item $X^\dag$ is self-similar; that is, the scaling property
      \eqref{e:scaling} applies to it.
      Fix $c>0$.
      Let $\widetilde{X}_t = cX_{tc^{-\alpha}}$
      and define the rescaled process $\widetilde{A}$ as
      \begin{equation*}
        \begin{split}
          \widetilde{A}_{t}
          = \int_0^t \omega(\widetilde{X}_s)\,\dd s
          &=\int_{0}^{t}
          k c^{-\alpha}\left (-(X_{c^{-\alpha}s})\right )^{-\alpha}\mathbbm{1}_{\{X_s<0\}}
          \mathrm{d}s\\
          &=\int_{0}^{c^{-\alpha}t}
          k(-X_{u})^{-\alpha}\mathbbm{1}_{\{X_u<0\}}
          \mathrm{d}u
          =A_{c^{-\alpha}t}
        \end{split}
      \end{equation*}
      Let $\widetilde{T}=\inf \{t: \widetilde{A}_{t}>\mathrm{e}_1\}
      = c^\alpha T$.
      The scaling property of $X^\dag$ now follows, using in the third line
      the scaling of $X$:
      \begin{equation*}
        \begin{split}
          \text{under }\mathbb{P}_{x},\text{ }
          (cX^\dag_{c^{-\alpha}t})_{t\geq 0}
          =\left (cX_{c^{-\alpha}t}\mathbbm{1}_{\{c^{-\alpha}t<T\}}\right )_{t\geq 0}
          &= \left( \widetilde{X}_t \Ind_{\{t<\widetilde{T}\}}\right)_{t\ge 0} \\
          &\overset{d}{=}
          \left (X_{t}\mathbbm{1}_{\{t< T\}}\right )_{t\geq 0}
          = X^\dag
          \text{ under }
          \mathbb{P}_{cx}.
        \end{split}
      \end{equation*}

    \item $Y$ is self-similar.

      Let $\tilde{C}$ be the functional $C$ applied to the process
      $(c X^\dag_{tc^{-\alpha}})_{t\ge 0}$; a calculation similar to the one
      above yields that
      $\tilde{C}^{-1}_t = c^\alpha C^{-1}_{c^{-\alpha}t}$.
      We deduce the scaling property of $Y$:
      \begin{equation*}
        \begin{split}
          \text{under }
          \mathbb{P}_{x},\  
          (cY_{c^{-\alpha}t})_{t\geq 0}
          = \left (cX^{\dag}_{C^{-1}_{c^{-\alpha}t}}\right )_{t\geq 0}
          &=\left ( cX^\dag_{c^{-\alpha}\tilde{C}^{-1}_t}\right )_{t\geq 0}
          \\
          &\overset{d}{=}\left (X^\dag_{C^{-1}_t}\right )_{t\geq 0}
          = Y
          \text{ under }
          \mathbb{P}_{cx},
        \end{split}
      \end{equation*}
      where we used step 1 in the third equality.
  \end{enumerate}
  Since $Y$ evidently has state space $[0,\infty)$, this completes the proof.
\end{proof}

The work of Lamperti \cite{Lam} provides a bijection between the class 
of L\'evy processes killed at an independent and exponentially distributed time 
and the class of positive self-similar Markov processes, 
which can be expressed through a straightforward space-time transformation;
\cite[\S 13]{Kyprianou} offers a textbook treatment.
Let $T(s) = \left( \int_0^\cdot Y_u^{-\alpha}\, \dd u\right)^{-1}(s)$, and
\begin{align}
  \xi_{s}=\log Y_{T(s)}, \quad s\geq 0.\label{lamperti}
\end{align}
Then, $\xi$ is a Lévy process killed at positive rate.

Our next aim is to obtain the characteristic function of $\xi$, using
the structure of $Y$ in terms of gluing path sections of $X^\dag$.

Define a stopping time 
\[\tau_{0}^{-}=\inf\{t\geq 0: X_{t}<0\},\]
at which the stable process $X$ passes below zero for the first time.
We will denote the first `gluing time' of $Y$ by 
$\sigma_0 = \inf\{ t\ge 0 : C^{-1}_t > C^{-1}_{t-}\}$; in fact,
$\sigma_0 = \tau_0^-$, but the latter notation would be misleading for $Y$,
since it actually stays positive at this time.

\begin{lemma} 
  \label{l:Ysigma}
  For any $x>0$, the joint law of $(Y_{\sigma_{0}}, Y_{\sigma_{0}-})$ under $\mathbb{P}_{x}$ is equal to that of $(xY_{\sigma_{0}}, xY_{\sigma_{0}-})$ under $\mathbb{P}_{1}$.
\end{lemma}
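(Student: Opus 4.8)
The plan is to deduce the statement directly from the self-similarity of $Y$ established in \autoref{p:ssmp}, by transporting the first gluing time and the values of $Y$ around it through the scaling map. Fix $x>0$ and work under $\mathbb{P}_1$. I would set $c=x$ in the scaling property and consider the rescaled process $\widetilde{Y}_t = x Y_{x^{-\alpha}t}$, which by \autoref{p:ssmp} has, under $\mathbb{P}_1$, the same law as $Y$ under $\mathbb{P}_x$. The entire task then reduces to checking that the pair $(\widetilde{Y}_{\widetilde{\sigma}_0}, \widetilde{Y}_{\widetilde{\sigma}_0-})$, formed from the first gluing time $\widetilde{\sigma}_0$ of $\widetilde{Y}$, is precisely $(x Y_{\sigma_0}, x Y_{\sigma_0-})$.

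The key point is the behaviour of the first gluing time under the rescaling. I would use the identity $\widetilde{C}^{-1}_t = x^{\alpha} C^{-1}_{x^{-\alpha}t}$ already obtained in the proof of \autoref{p:ssmp} (with $c=x$), where $\widetilde{C}$ is the censoring functional of the rescaled process. Since the gluing times of $\widetilde{Y}$ are exactly the jump times of $\widetilde{C}^{-1}$, and this identity shows that $\widetilde{C}^{-1}$ jumps at $t$ precisely when $C^{-1}$ jumps at $x^{-\alpha}t$, the set of gluing times of $\widetilde{Y}$ is the image under multiplication by $x^{\alpha}$ of that of $Y$; in particular the first one satisfies $\widetilde{\sigma}_0 = x^{\alpha}\sigma_0$. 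Substituting, $\widetilde{Y}_{\widetilde{\sigma}_0} = x Y_{x^{-\alpha}\widetilde{\sigma}_0} = x Y_{\sigma_0}$, and, passing to left limits through the continuous increasing time change $s \mapsto x^{-\alpha}s$, $\widetilde{Y}_{\widetilde{\sigma}_0-} = x Y_{\sigma_0-}$. Because $(\widetilde{Y}, \widetilde{C}^{-1}, \widetilde{\sigma}_0)$ is built from $\widetilde{X}^{\dag}_t = x X^{\dag}_{x^{-\alpha}t}$ by exactly the same recipe as $(Y, C^{-1}, \sigma_0)$ is built from $X^{\dag}$, and $\widetilde{X}^{\dag}$ under $\mathbb{P}_1$ has the law of $X^{\dag}$ under $\mathbb{P}_x$ (step 1 of \autoref{p:ssmp}), the joint law of $(\widetilde{Y}_{\widetilde{\sigma}_0}, \widetilde{Y}_{\widetilde{\sigma}_0-})$ under $\mathbb{P}_1$ coincides with that of $(Y_{\sigma_0}, Y_{\sigma_0-})$ under $\mathbb{P}_x$. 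Combining this with the two evaluations above yields the claim.

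The main obstacle I anticipate is conceptual rather than computational: $\sigma_0$ is \emph{not} a measurable functional of the path of $Y$ on its own. A gluing jump of $Y$, from the value of $X$ just before it first goes negative to the value at which it first returns to $[0,\infty)$, is indistinguishable, looking only at $Y$, from an ordinary jump inherited from $X$ within the positive half-line, since both endpoints are nonnegative. Consequently one cannot simply feed $\sigma_0$ into the pssMp scaling property of $Y$ as a black box; the argument must be run at the level of the underlying $X^{\dag}$ and its censoring time change, which is exactly what the relation $\widetilde{C}^{-1}_t = x^{\alpha} C^{-1}_{x^{-\alpha}t}$ supplies. A secondary point to dispatch with a word is the killing case, in which the first negative excursion ends in killing and $\sigma_0$ is the jump of $C^{-1}$ to the killing time, so that $Y_{\sigma_0} = 0$; this scales consistently and needs no separate treatment, since $x\cdot 0 = 0$.
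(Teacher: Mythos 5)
Your proposal is correct and follows essentially the same route as the paper: both arguments run at the level of the underlying construction, establish that the first gluing time scales as $\widetilde{\sigma}_0 = c^{\alpha}\sigma_0$ (the paper via the identity $\sigma_0 = \tau_0^-$ and the scaling of $X$, you via the jump times of $\widetilde{C}^{-1}_t = c^{\alpha}C^{-1}_{c^{-\alpha}t}$, which amounts to the same thing), and then conclude from the scaling property of $X^\dag$. Your explicit treatment of the left limit and of the pair $(Y_{\sigma_0},Y_{\sigma_0-})$, and your remark that $\sigma_0$ is not a functional of $Y$ alone, are correct refinements but do not change the argument.
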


\begin{proof}
  The proof is very similar to \cite[Lemma 3.5]{Alex}, but as the situation is
  slightly different, we include it for completeness.
  Fix $c>0$ and define the two rescaled processes $(\widetilde{X}_{t})_{t\geq 0}$ by $\widetilde{X}_{t}=cX_{c^{-\alpha}t}$ and $(\widetilde{Y}_{t})_{t\geq 0}$ by $\widetilde{Y}_{t}=cY_{c^{-\alpha}t}$. Let $\tilde{\tau}_{0}^{-}=\inf \{t\geq 0: \tilde{X}_{t}<0\}$,
  and analogously define $\tilde{\sigma}_0$, the first gluing time of $\tilde{Y}$, which is
  equal to $\tilde{\tau}_0^-$.
  Then,
  \begin{equation*}
    c^{\alpha}\tau_{0}^{-}=\inf\{ c^{\alpha}t: t\geq 0, X_{t}<0\}=\inf\{t\geq 0: cX_{c^{-\alpha}t}<0\}=\tilde{\tau}_{0}^{-}.
  \end{equation*}
  Therefore, for every $c,x>0$ and $y,z\in\RR$,
  $\PP_x(Y_{\sigma_0-} \in \mathrm{d}y, Y_{\sigma_0} \in \mathrm{d}z)
  = \PP_{cx}( c^{-1}Y_{\sigma_0-}\in \mathrm{d}y, c^{-1} Y_{\sigma_0} \in \mathrm{d}z)$.
  The lemma follows by setting $c=1/x$.
\end{proof}

Denote by $p$ the \emph{killing probability} of $Y$ at the first gluing event, namely
\begin{equation*}
  p
  =
  \mathbb{P}_x\left (Y_{\sigma_{0}}=0\right ),
\end{equation*}
which we assert is independent of $x$.
The following lemma gives the explicit expression for $p$.

\begin{lemma}[Killing probability]
  \label{l:kp}
  The killing probability $p$ is independent of $x$ and is given by
  \[
    p = \frac{k}{c_{+}/\alpha +k}.
  \]
  As a consequence, $T<\infty$.
\end{lemma}

\begin{proof}
  Recall that $T$ is the time at which $X^\dag$ is killed,
  and so $C_T$ is the killing time of $Y$.
  Let $R = \inf\{ t > \tau_0^- : X_t \ge 0 \}$, the first return time
  of $X$ above zero.
  In terms of these quantities, $p = \PP(T \le R)$.

  Consider the dual process $\hat{X}$ with distribution $-X$,
  which is still a stable process (with different parameters).
  Let $\hat{X}^*$ denote the process $\hat{X}$ sent to zero at the
  first time it passes below zero. It is well-known 
  \cite[Theorem~2]{CC} that
  the Lamperti transform of the pssMp $\hat{X}^*$ is killed at
  exponential time of rate $c_+/\alpha$, regardless
  of the value of $\hat{X}^*_0$, and said killing time
  corresponds (through the Lamperti time-change)
  to the first time that $\hat{X}$ passes below zero.

  Let $\hat{\tau}_0^- = \inf\{ t\ge 0: \hat{X}_t < 0\}$.
  The discussion above amounts to the statement that, whatever the initial value
  of $\hat{X}$, the distribution of 
  $\int_0^{\hat{\tau}_0^-} (\hat{X}_u)^{-\alpha} \, \mathrm{d} u$
  is exponential with parameter $c_+/\alpha$.
  This leads us to
  the following calculation, in which $e_1$ is a random variable with exponential
  distribution of rate $1$, independent of $\hat{X}$.
  \begin{align*}
    \mathbb{P}_x(R < T)
    &=
    \mathbb{E}_x \left[
      \mathbb{P}_{-X_{\tau_0^-}}
      \biggl(
        \int_0^{\hat{\tau}_0^-} k(\hat{X}_u)^{-\alpha} \, \mathrm{d} u < e_1 
      \biggr)
    \right]
    \\
    &= 
    \mathbb{E}_x \left[
      \mathbb{P}_{-X_{\tau_0^-}}
      \biggl(
        \int_0^{\hat{\tau}_0^-} (\hat{X}_u)^{-\alpha} \, \mathrm{d} u < e_1/k
      \biggr)
    \right]
    \\
    &=\frac{\frac{c_{+}}{{\alpha}}}{\frac{c_{+}}{\alpha}+k}.
  \end{align*}
  The killing probability is 
  $p = 1 - \mathbb{P}(R<T)$, and this completes the proof.

  Since $p$ is indeed independent of $X$, and since $X^\dag$ has fixed, positive
  probability of being killed in every excursion it makes below the level $0$,
  it must eventually be killed; that is, $T<\infty$.
\end{proof}

Write $X^*$ for the stable process killed upon exiting $[0,\infty)$, and
$\xi^*$ for the Lévy process appearing in its Lamperti representation.
The law of the latter was
characterised in \cite[Theorem 2]{CC}, and its characteristic exponent
computed in \cite[Theorem 1]{KP-hg}.
Making use of the preparatory work above, we are now able to describe the
path structure of $\xi$ and compute its characteristic exponent.

\begin{proposition}[Structure of $\xi$]
  \label{p:structure}
  The L\'evy process $\xi$ is the sum of two independent L\'evy processes $\xi^{1}$ and $\xi^{2}$, which are characterised as follows:
  \begin{enumerate}
    \item
      The L\'evy process $\xi^{1}$ has characteristic exponent
      \[\Psi^{1}(\theta)=\Psi^{*}(\theta)-\frac{c_{-}}{\alpha},\,\,\,\,\theta \in \mathbb{R},\]
      where $\Psi^{*}$ is the characteristic exponent of the process $\xi^{*}$.
    \item
      The process $\xi^{2}$ has has characteristic exponent
      \[\Psi^{2}(\theta)=(1-p)\Psi^{\text{\textnormal{cpp}}}(\theta)+p\frac{c_{-}}{\alpha},\]
      where $p=\frac{k}{c_{+}/\alpha +k}$ is the killing probability and $\Psi^{\text{\textnormal{cpp}}}$ is the characteristic exponent of $\xi^{c}$, a compound Poisson process with jump rate $c_{-}/\alpha$, which is expressed as
      \[
        \Psi^{\text{\textnormal{cpp}}}(\theta)
        =
        \frac{c_-}{\alpha}
        \left (1-\frac{\Gamma(1-\alpha \rho+i\theta)\Gamma(\alpha \rho-i\theta)\Gamma(1+i\theta)\Gamma(\alpha-i\theta)}{\Gamma(\alpha \rho)\Gamma(1-\alpha \rho)\Gamma(\alpha)}\right ),
      \]
      for $\theta\in \mathbb{R}$.
  \end{enumerate}
\end{proposition}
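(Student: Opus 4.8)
The plan is to read off the decomposition of $\xi$ directly from the way $Y$ is assembled from the positive excursions of $X^\dag$, in the spirit of the path-censored case \cite{KPW}, and then to carry the omega-clock killing through the Lamperti time-change. Recall that $Y$ is obtained by censoring the negative parts of $X^\dag$ and gluing the non-negative pieces together at the gluing times, the first of which is $\sigma_0 = \tau_0^-$. Since $\omega$ vanishes on $[0,\infty)$, the process $X^\dag$ coincides with the stable process $X^*$ killed on first exit from $[0,\infty)$ throughout each positive excursion. Hence, up to the first gluing time, $\xi$ agrees in law with $\xi^*$; the only difference is that the killing of $\xi^*$ — which, in the Lamperti clock, occurs at the constant rate $c_-/\alpha$ and corresponds to the event of first passage below zero — is, for $Y$, not a genuine killing but the trigger for a gluing event. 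This is the origin of the two pieces: $\xi^1$ will be the motion of $\xi^*$ with its killing removed, and $\xi^2$ will record what happens at the gluing events.

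First I would make the path decomposition rigorous. By the strong Markov property of $X$ at $\tau_0^-$ and at the successive return times $R$, together with the self-similarity of \autoref{p:ssmp}, the successive positive excursions are i.i.d.\ copies of the $X^*$-motion and the gluing events are i.i.d.\ and independent of them. In the Lamperti picture, the Caballero--Chaumont description \cite{CC} of $\xi^*$ as a Lévy process killed at an \emph{independent} exponential time of rate $c_-/\alpha$ means precisely that the within-excursion motion is independent of the crossing clock; the scaling lemma for $(Y_{\sigma_0},Y_{\sigma_0-})$ then guarantees that the gluing jump $\log Y_{\sigma_0} - \log Y_{\sigma_0-}$ has a law independent of the current position. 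Consequently $\xi$ is the independent sum of a Lévy process $\xi^1$ carrying the within-excursion dynamics and a Lévy process $\xi^2$ built from the repurposed clock. Since removing a killing at rate $q$ subtracts $q$ from the characteristic exponent (see \eqref{psixi}) and $\Psi^*(0) = c_-/\alpha$, this identifies $\Psi^1(\theta) = \Psi^*(\theta) - c_-/\alpha$ and shows $\xi^1$ is unkilled.

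Next I would analyse $\xi^2$. Its events occur along the rate-$c_-/\alpha$ clock inherited from $\xi^*$. At each event the excursion of $X^\dag$ below zero is either absorbed by the omega-clock — which happens with probability $p = k/(c_+/\alpha+k)$ by the killing-probability lemma, contributing a killing rate $p\,c_-/\alpha$ to $\xi^2$ — or it is survived, with probability $1-p$, in which case $Y$ jumps from $Y_{\sigma_0-}$ to $Y_{\sigma_0}>0$ and $\xi$ makes a finite jump. Writing $\xi^c$ for the compound Poisson process of rate $c_-/\alpha$ whose jumps are distributed as these survived returns, the superposition gives $\Psi^2(\theta) = (1-p)\Psi^{cpp}(\theta) + p\,c_-/\alpha$ with $\Psi^{cpp}(\theta) = \tfrac{c_-}{\alpha}\bigl(1 - \hat F(\theta)\bigr)$, where
\[
  \hat F(\theta) = \EE\bigl[(Y_{\sigma_0}/Y_{\sigma_0-})^{\iu\theta} \mid Y_{\sigma_0}>0\bigr]
\]
is the characteristic function of the log-jump.

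It remains to compute $\hat F$, and this is where I expect the real work to lie. The required law is that of the multiplicative jump $Y_{\sigma_0}/Y_{\sigma_0-}$, equivalently the joint law of the position of $X$ just before first passage below zero, the undershoot below zero, and the position at which $X$ first re-enters $(0,\infty)$. My approach would be to use the scaling lemma to fix $Y_{\sigma_0-}=1$, then condition on the undershoot: the joint law of $(X_{\tau_0^--},-X_{\tau_0^-})$ for a stable process is explicit from fluctuation theory as a Beta-type density, and, by the Markov property and duality, the return position starting from a given undershoot is governed by the first passage of the dual stable process, whose overshoot law is likewise explicit. Assembling these ingredients reduces $\hat F(\theta)$ to a product of Beta integrals, which I would evaluate using the reflection and Gauss multiplication formulae for the Gamma function to reach
\[
  \hat F(\theta)
  =
  \frac{\Gamma(1-\alpha\rho+\iu\theta)\Gamma(\alpha\rho-\iu\theta)\Gamma(1+\iu\theta)\Gamma(\alpha-\iu\theta)}{\Gamma(\alpha\rho)\Gamma(1-\alpha\rho)\Gamma(\alpha)}.
\]
As sanity checks, $\hat F(0)=1$ confirms that $\xi^c$ is an honest unkilled compound Poisson process, while setting $k=0$ gives $p=0$ and recovers the path-censored exponent of \cite{KPW}. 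The main obstacle is thus the bookkeeping of the overshoot/undershoot densities and the Gamma-function manipulations needed to collapse the resulting integrals into the compact product above.
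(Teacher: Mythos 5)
Your route is the same as the paper's: the published proof simply defers to \cite[Proposition~3.4]{KPW} and notes that the gluing event at $\sigma_0$ may now result in killing, and your write-up is essentially that argument spelled out. The structural part — the i.i.d.\ excursion decomposition, identifying $\xi^1$ as $\xi^*$ with its killing removed, and thinning the rate-$c_-/\alpha$ clock into kills (probability $p$) and jumps (probability $1-p$) — is sound. The gap is in your final step, and it sits exactly where the $k>0$ case genuinely differs from \cite{KPW}. You define $\hat F(\theta)=\EE[(Y_{\sigma_0}/Y_{\sigma_0-})^{\iu\theta}\mid Y_{\sigma_0}>0]$, a law \emph{conditional on the negative excursion surviving the omega-clock}, but the computation you outline — undershoot density at $\tau_0^-$, Markov property, overshoot density of the dual at first passage below zero — uses only \emph{unconditional} fluctuation identities, and so by construction reproduces the $k=0$ jump law. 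It therefore silently assumes that the survival event $\{Y_{\sigma_0}>0\}$ is independent of the gluing jump. That independence is the crux and is not free: given the excursion below zero, the survival probability is $\exp(-k\zeta)$ with $\zeta=\int_{\tau_0^-}^{R}(-X_s)^{-\alpha}\,\dd s$ the excursion's duration in the Lamperti clock, and the return position $X_R$ is not manifestly independent of $\zeta$ — in the Lamperti--Kiu picture, $\log X_R-\log(-X_{\tau_0^-})$ is the within-excursion Lévy motion evaluated \emph{at time} $\zeta$ plus an independent terminal jump, while conditioning on survival tilts $\zeta$ from an $\exp(c_+/\alpha)$ to an $\exp(c_+/\alpha+k)$ law. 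Neither of your sanity checks can detect this: $\hat F(0)=1$ holds for any probability law, and at $k=0$ the conditional and unconditional laws coincide trivially.

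To close the gap you must compute the joint transform $\EE\bigl[e^{\iu\theta(\log Y_{\sigma_0}-\log Y_{\sigma_0-})}\Ind_{\{Y_{\sigma_0}>0\}}\bigr]$ directly, i.e.\ the law of the return position \emph{weighted by} $e^{-k\zeta}$, rather than the product $\PP(\text{survive})\cdot\EE[e^{\iu\theta(\cdots)}]$; equivalently you need the overshoot of the dual process at first passage below zero \emph{jointly} with the exponential functional $\zeta$, not just its marginal. Either establish the required independence explicitly or carry the weight $e^{-k\zeta}$ through the excursion computation; as written, your plan cannot see the difficulty, which is precisely the content hidden in the paper's phrase ``accounting for this in the obvious way.''
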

\begin{proof}
  The proof is almost identical to that of Proposition~3.4 in \cite{KPW},
  which operates by considering the path up to the gluing time $\sigma_0$.
  The only difference is that at time $\sigma_0$, the process $X^\dag$
  may be sent to zero, which corresponds to killing of the process $\xi^2$
  in place of a jump. Accounting for this in the obvious way,
  making use of Lemmas~\ref{l:Ysigma} and~\ref{l:kp} in the appropriate place,
  yields the result on the path structure.
  The expression for $\Psi^{\text{cpp}}$ is obtained from
  \cite[Proposition~4.2]{KPW}.
\end{proof}

This structure will allow us to compute the characteristic exponent of $\xi$.
To give the explicit expression, we first require some more notation.

\begin{lemma}
  \label{l:delta}
  Let
  \[
    \delta
    = \frac{1}{2}
    \Bigl( \alpha - 
      \frac{1}{\pi}
      \arccos\bigl( 
        p\cos \pi(\alpha\rho - \alpha\hat{\rho})
        + (1-p)\cos\pi\alpha
      \bigr)
    \Bigr).
  \]
  Then, $\delta$ uniquely satisfies the conditions
  \begin{align}
    \max(0,\alpha-1) < \delta < \min(\alpha\rho,\alpha\hat{\rho})\label{e:gamma-ineq}
  \end{align}
  and
  \begin{align}
    (1-p)\sin \pi\alpha \rho \sin\pi\alpha \hat{\rho}
    =
    \sin\pi(\alpha\rho - \delta) \sin\pi(\alpha\hat{\rho}-\delta).\label{gammadef}
  \end{align}
\end{lemma}
\begin{proof}
  Using product-to-sum identities, condition \eqref{gammadef} can be rewritten
  as follows:
  \begin{align}
    (1-p)\bigl( \cos \pi(\alpha\rho-\alpha\rhoh) - \cos \pi\alpha \bigr)
    &= \cos \pi(\alpha\rho-\alpha\rhoh)-\cos \pi s \nonumber \\
    \cos(\pi s) 
    &= p \cos \pi(\alpha \rho - \alpha\hat{\rho}) + (1-p) \cos \pi\alpha,
    \label{e:gamma-tr}
  \end{align}
  where $s = 2(\alpha/2-\delta)$,
  and the inequalities \eqref{e:gamma-ineq} are equivalent to
  $\max(\alpha\rho-\alpha\hat{\rho},\alpha\hat{\rho}-\alpha\rho) < s < \min(\alpha,2-\alpha)$.

  We divide our analysis into two cases depending on the value of $\alpha$.
  When $\alpha \in (0,1]$, we have that $-\alpha < \alpha\rho - \alpha\hat{\rho} < \alpha$.
  If $\rho \ge 1/2$, then taking 
  \[
    s = \frac{1}{\pi}\arccos\bigl(p \cos \pi(\alpha \rho - \alpha\hat{\rho}) + (1-p) \cos \pi\alpha\bigr)
  \]
  yields
  $0 < \alpha\rho - \alpha\hat{\rho} < s < \alpha \le  1$. Moreover, this is the unique
  value of $s$ in the interval specified which satisfies \eqref{e:gamma-tr}.
  The analysis is similar when $\rho < 1/2$.

  On the other hand, when $\alpha \in (1,2)$, we have instead
  $\alpha-2<\alpha\rho - \alpha\hat{\rho} < 2-\alpha$.
  If $\rho \ge 1/2$, then taking $s$ as above gives 
  $0 < \alpha\rho - \alpha\hat{\rho} < s < 2-\alpha < 1$;
  again, the uniqueness argument and the case $\rho<1/2$ are similar.
\end{proof}

We briefly note a few extensions and special cases.
When $p=1$, which is not part of our parameter set but
corresponds formally to $k=\infty$, that is, immediate killing when
$X$ goes below zero, we have $\delta = \min(\alpha\rho,\alpha\hat{\rho})$.
When $\rho = 1/2$, the symmetric case, we have
$\delta = \frac{1}{2}\bigl(\alpha 
- \frac{1}{\pi}\arccos\bigl(p+(1-p)\cos \pi\alpha\bigr)\bigr)
= \frac{\alpha}{2} - \frac{1}{\pi}\arcsin\bigl(\sqrt{1-p}\sin(\pi\alpha/2)\bigr)$;
this calculation corresponds to the one cited in
\cite{DHG} for the process denoted there by $Y^\natural$.

The structure of $\xi$ allows us to deduce explicitly the characteristic
exponent of the process.

\begin{corollary}
  The characteristic exponent of $\xi$ is expressed as
  \begin{align}
    \Psi(\theta)
    =
    \frac{\Gamma(\alpha-i\theta)\Gamma(\alpha\rho-i\theta)
    \Gamma(1+i\theta)\Gamma(1-\alpha\rho+i\theta)}
    {\Gamma(\alpha-\delta-i\theta)\Gamma(\delta-i\theta)
    \Gamma(\delta+1-\alpha+i\theta)\Gamma(1-\delta+i\theta)}.\label{WHPsi}
  \end{align}
\end{corollary}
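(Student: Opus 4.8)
The plan is to read off $\Psi$ directly from the structural decomposition in \autoref{p:structure}, which gives $\Psi = \Psi^{1} + \Psi^{2}$ with $\Psi^{1} = \Psi^{*} - c_-/\alpha$ and $\Psi^{2} = (1-p)\Psi^{cpp} + p\,c_-/\alpha$. Substituting the explicit form of $\Psi^{cpp}$ and isolating its $\theta$-independent part $c_-/\alpha$, the constants assemble as $(1-p)c_-/\alpha + p\,c_-/\alpha = c_-/\alpha$ in $\Psi^{2}$, which cancels the $-c_-/\alpha$ carried by $\Psi^{1}$. What remains is
\[ \Psi(\theta) = \Psi^{*}(\theta) - (1-p)\frac{c_-}{\alpha}\,\frac{N(\theta)}{\Gamma(\alpha)\Gamma(\alpha\rho)\Gamma(1-\alpha\rho)}, \]
where $N(\theta) = \Gamma(\alpha-i\theta)\Gamma(\alpha\rho-i\theta)\Gamma(1+i\theta)\Gamma(1-\alpha\rho+i\theta)$ is exactly the numerator in the claimed identity \eqref{WHPsi}. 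For $\Psi^{*}$ I would quote the expression from \cite{CC}, namely $\Psi^{*}(\theta) = \Gamma(\alpha-i\theta)\Gamma(1+i\theta)/[\Gamma(\alpha\hat{\rho}-i\theta)\Gamma(1-\alpha\hat{\rho}+i\theta)]$; one checks that $\Psi^{*}(0) = c_-/\alpha$, consistent with $\xi^{*}$ carrying the whole killing rate.

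The next step is to write both surviving terms as $N(\theta)/\pi^{2}$ times a product of two sines, using the reflection formula $\Gamma(z)\Gamma(1-z) = \pi/\sin\pi z$ repeatedly. On the one hand, $c_-/\alpha = \Gamma(\alpha)\sin\pi\alpha\hat{\rho}/\pi$ and $1/[\Gamma(\alpha\rho)\Gamma(1-\alpha\rho)] = \sin\pi\alpha\rho/\pi$, so the coefficient of $N(\theta)$ in the second term collapses to $(1-p)\sin\pi\alpha\rho\sin\pi\alpha\hat{\rho}/\pi^{2}$. On the other hand, multiplying numerator and denominator of $\Psi^{*}$ by $\Gamma(\alpha\rho-i\theta)\Gamma(1-\alpha\rho+i\theta)$ and applying the reflection formula twice turns it into $N(\theta)\sin\pi(\alpha\rho-i\theta)\sin\pi(\alpha\hat{\rho}-i\theta)/\pi^{2}$. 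Hence
\[ \Psi(\theta) = \frac{N(\theta)}{\pi^{2}}\Bigl( \sin\pi(\alpha\rho-i\theta)\sin\pi(\alpha\hat{\rho}-i\theta) - (1-p)\sin\pi\alpha\rho\sin\pi\alpha\hat{\rho} \Bigr). \]

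At this point I would invoke the defining relation \eqref{gammadef} for $\delta$ to replace $(1-p)\sin\pi\alpha\rho\sin\pi\alpha\hat{\rho}$ by $\sin\pi(\alpha\rho-\delta)\sin\pi(\alpha\hat{\rho}-\delta)$. Applying the product-to-sum identity to each of the two sine products, both contribute a common term $\tfrac12\cos\pi(\alpha\rho-\alpha\hat{\rho})$ which cancels in the difference, leaving $\tfrac12[\cos\pi(\alpha-2\delta)-\cos\pi(\alpha-2i\theta)]$. Recombining this difference of cosines by the sum-to-product identity yields (up to the $N(\theta)/\pi^{2}$ prefactor) the product $\sin\pi(\delta-i\theta)\sin\pi(\alpha-\delta-i\theta)$, and a final application of the reflection formula to these two sines produces exactly the four factors $\Gamma(\alpha-\delta-i\theta)$, $\Gamma(\delta-i\theta)$, $\Gamma(\delta+1-\alpha+i\theta)$, $\Gamma(1-\delta+i\theta)$ in the denominator of \eqref{WHPsi}.

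The computation is essentially forced once the common factor $N(\theta)$ has been isolated, so there is no real analytic obstacle; the effort is in the trigonometric bookkeeping. The one genuinely essential input is the characterisation of $\delta$ through \eqref{gammadef}: it is precisely this relation that makes the $\delta$-free cosine $\cos\pi(\alpha\rho-\alpha\hat{\rho})$ cancel against its counterpart arising from the $\theta$-dependent product, and thereby converts the $(1-p)$-weighting into a denominator expressed entirely in terms of $\delta$. I would therefore present \eqref{gammadef} as the crux of the argument and treat the surrounding Gamma–sine identities as routine.
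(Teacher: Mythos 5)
Your proposal is correct and follows essentially the same route as the paper: sum the two components from the structural proposition, cancel the constant killing terms, factor out the common product of Gamma functions via the reflection formula, and use the defining relation \eqref{gammadef} for $\delta$ together with product-to-sum and sum-to-product identities to convert the bracketed sine expression into the stated denominator. Your identification of \eqref{gammadef} as the crux matches the role it plays in the paper's argument.
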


\begin{proof}
  The beginning of the proof resembles that of \cite[Theorem 5.3]{KPW},
  but it then diverges due to the killing.
  Using the structure described in \autoref{p:structure}
  and substituting the expression for $\Psi^*$ found in \cite[Theorem 1]{KP-hg},
  we obtain:
  \begin{align}
    \Psi(\theta)
    &= \Psi^{*}(\theta)+(1-p)\Psi^{\text{cpp}}(\theta)-(1-p)\frac{c_{-}}{\alpha}
    \nonumber \\
    &= \frac{\Gamma(\alpha-i\theta)\Gamma(1+i\theta)}
    {\Gamma(\alpha \hat{\rho}-i\theta)\Gamma(1-\alpha\hat{\rho}+i\theta)}
    \nonumber \\
    &\quad {} + (1-p)\frac{c_{-}}{\alpha}
    - (1-p)\frac{c_{-}}{\alpha}
    \frac{\Gamma(1-\alpha\rho+i\theta)\Gamma(\alpha \rho-i\theta)
    \Gamma(\alpha-i\theta)\Gamma(1+i\theta)}
    {\Gamma(\alpha\rho)\Gamma(1-\alpha \rho)\Gamma(\alpha)} 
    \nonumber \\
    &\quad {} - (1-p)\frac{c_{-}}{\alpha}.
    \nonumber
  \end{align}
  Rearranging this and substituting the expression for $c_-$, yields
  \begin{align}
    \Psi(\theta) &= \Gamma(\alpha-i\theta)\Gamma(1+i\theta) 
    \nonumber \\
    &\quad {} \times 
    \left [
      \frac{1}{\Gamma(\alpha \hat{\rho}-i\theta)\Gamma(1-\alpha \hat{\rho}+i\theta)}
      -(1-p)\frac{\Gamma(\alpha \rho-i\theta)\Gamma(1-\alpha \rho +i\theta)}
      {\Gamma(\alpha \rho)\Gamma(1-\alpha \rho)
      \Gamma(\alpha \hat{\rho})\Gamma(1-\alpha \hat{\rho})}
    \right ]  
    \nonumber \\
    &= \frac{1}{\pi^{2}}
    \Gamma(\alpha-i\theta)\Gamma(1+i\theta)
    \Gamma(\alpha \rho-i\theta)\Gamma(1-\alpha\rho+i\theta) 
    \nonumber \\
    &\quad {} \times 
    \left [
      \sin\pi(\alpha\rho-i\theta)\sin\pi(\alpha \hat{\rho}-i\theta)
      -(1-p)\sin\pi\alpha\rho\sin\pi\alpha \hat{\rho}
    \right ],
    \label{psi4sin}
  \end{align}
  where we apply the reflection formula to the last equality. 
  Applying \eqref{gammadef} gives that
  \begin{align}
    \MoveEqLeft \sin\pi(\alpha\rho-i\theta)\sin\pi(\alpha \hat{\rho}-i\theta)-(1-p)\sin\pi\alpha\rho\sin\pi\alpha \hat{\rho}
    \nonumber \\
    &=
    \frac{1}{2}\bigl[
      \cos \pi(\alpha\rho - \alpha\rhoh) - \cos\pi(\alpha-2i\theta) - \cos\pi(\alpha\rho-\alpha\rhoh)
      + \cos\pi(\alpha-2\delta)
    \bigr]
    \nonumber \\
    &= \sin \pi(\alpha-\delta - i\theta) \sin \pi(\delta - i\theta) 
    \nonumber \\
    &=
    \frac{\pi^2}
    {\Gamma(\alpha-\delta-i\theta)\Gamma(1-\alpha+\delta+i\theta)
    \Gamma(\delta-i\theta)\Gamma(1-\delta+i\theta)},
    \label{sin4ga}
  \end{align}
  using product-to-sum and sum-to-product identities followed by the reflection formula.
  Finally, substituting \eqref{sin4ga} into \eqref{psi4sin} yields the expresssion
  in the statement.
\end{proof}

\begin{remark}
  We close the section by describing the situation where $k=0$.  This is not
  part of our assumptions, but there is a link to the
  existing literature.  In this case, noting $T=\infty$,
  one should replace $X^\dag$ with the
  definition $X^\dag_t = X_t\Ind_{\{t < T_0\}}$ to ensure that the state
  $0$ is absorbing.

  The Lévy process $\xi$ is unkilled in this case, and in \autoref{l:delta},
  the lower bound $\delta = \max(0,\alpha-1)$ is attained.  Here we have $p=0$,
  which represents the fact that $Y$ is simply the path-censored stable process
  defined in \cite{KPW}. Many of the arguments that appear in this section are
  analogous to ones in that work, though the presence of killing when $k>0$
  introduces some interesting novel features.
\end{remark}

\section{An optimal stopping problem for the Lamperti transform \texorpdfstring{$\xi$}{xi}}
\label{s:os-lamperti}

Having identified the pssMp $Y$ via its Lamperti transform $\xi$, we are in
a position to solve a related optimal stopping problem for the latter process,
which we will later translate into a solution of the original problem.

The solution to the optimal stopping problem for $\xi$
will rely on the \emph{Wiener-Hopf factorisation}
of Lévy processes. This can briefly be described as follows
in terms of characteristic and Laplace exponents, with the meaning tha
a function $\phi$ is the Laplace exponent of a subordinator $H$
if $\EE e^{-\lambda H_t} = e^{-t\phi(\lambda)}$.
If $\Psi$ is the characteristic exponent of a Lévy process $\xi$
which is killed at rate $q\ge 0$, then there exists a factorisation
of $\Psi$ of the form
\begin{align}
  \Psi(\theta)
  =
  \kappa (q, -i\theta)\hat{\kappa}(q, i\theta),
  \quad \theta \in \mathbb{R},\label{kappa4phi}
\end{align}
where $\kappa(q,\cdot)$ and $\hat{\kappa}(q,\cdot)$ are Laplace exponents
of two (possibly killed) subordinators, known as the ascending
and descending ladder height processes.
The functions $\kappa$ and $\hat{\kappa}$ are called the
\emph{Wiener-Hopf factors} of $\Psi$ (or of $\xi$),
and this factorisation is unique up to multiplication of each
factor by a positive constant.

These subordinators describe the way that $\xi$ makes new maxima and minima,
which goes some way to explaining their utility in the context of this
problem.
A full treatment of the theory of Wiener-Hopf factorisation can be
found in \cite[\S 6]{Kyprianou}.

Our first goal in this section is to characterise $\xi$ by identifying
it as a double hypergeometric Lévy process. This is a recently defined class of
processes with explicit Wiener-Hopf factorisation.
The process $\xi$ is the second known example of a double hypergeometric
process found `in the wild', the other being the ricocheted stable process
described by Budd \cite{Budd}.

\subsection{Identification of the Lamperti transform \texorpdfstring{$\xi$}{xi}}
Double hypergeometric processes, introduced in \cite{DHG}, 
are a family of L\'evy processes with known Wiener-Hopf factorisation.
The class can be characterised as follows.
Let $\mathscr{O}$ be the set of all 
$(\mathtt{a},\mathtt{b},\mathtt{c},\mathtt{d}) \in [0,\infty)^4$
satisfying one of
\begin{enumerate}
  \item\label{i:O-member-1}
    for some $n\in \mathbb{N} \cup\{0\}$,
    $\mathtt{c}+n \le \mathtt{a}+n \le \mathtt{d} \le \mathtt{b} \le \mathtt{c}+n+1$,
    or
  \item
    for some $n\in \mathbb{N}$,
    $\mathtt{a}+n-1 \le \mathtt{c}+n\le\mathtt{b}\le\mathtt{d}\le\mathtt{a}+n$.
\end{enumerate}
When $(\mathtt{a},\mathtt{b},\mathtt{c},\mathtt{d}) \in \mathscr{O}$,
the function
\[
  \mathrm{B}(\mathtt{a}, \mathtt{b}, \mathtt{c}, \mathtt{d}; \lambda)
  \coloneqq 
  \frac{\Gamma (\lambda+\mathtt{a})\Gamma(\lambda+\mathtt{b})}
  {\Gamma(\lambda+\mathtt{c}) \Gamma(\lambda+\mathtt{d})},
  \quad \tRe \lambda \geq 0,
\]
is the Laplace exponent of a subordinator.

Moreover,
it is shown in \cite[Corollary~2.1]{DHG} that,
when $(\mathtt{a}, \mathtt{b}, \mathtt{c}, \mathtt{d}), (\hat{\mathtt{a}}, \hat{\mathtt{b}}, \hat{\mathtt{c}},\hat{ \mathtt{d}}) \in \mathscr{O}$,
the function
\[
  \Psi(\theta)
  =
  \mathrm{B}(\mathtt{a}, \mathtt{b}, \mathtt{c}, \mathtt{d}; -i\theta) 
  \mathrm{B}(\hat{\mathtt{a}},\hat{\mathtt{b}}, \hat{\mathtt{c}}, \hat{\mathtt{d}}; i\theta),
  \quad \theta\in \mathbb{R},
\]
is the characteristic exponent of a Lévy process in the
\emph{double hypergeometric class}.

\begin{lemma}
  The process $\xi$ is a double hypergeometric L\'evy process with parameters
  \[
    \mathtt{a}=\alpha\rho,
    \mathtt{b}=\alpha, 
    \mathtt{c}=\delta,
    \mathtt{d}=\alpha-\delta,
  \]
  and
  \[
    \hat{\mathtt{a}}=1-\alpha \rho,
    \hat{\mathtt{b}}=1, 
    \hat{\mathtt{c}}=\delta+1-\alpha,
    \hat{\mathtt{d}}=1-\delta.
  \]
  Accordingly, the Wiener-Hopf factors of $\xi$ are
  \begin{align}
    \kappa(q,z)
    =
    \frac{\Gamma(\alpha\rho+z)\Gamma(\alpha+z)}
    {\Gamma(\delta+z)\Gamma(\alpha-\delta+z)},\label{kapp}
  \end{align}
  and
  \begin{align}
    \hat{\kappa}(q,z)
    =
    \frac{\Gamma(1-\alpha \rho+z)\Gamma(1+z)}
    {\Gamma(\delta+1-\alpha+z)\Gamma(1-\delta+z)},\label{kapn}
  \end{align}
  where
  \[
    q = 
    \frac{\Gamma(\alpha\rho)\Gamma(\alpha)\Gamma(1-\alpha\rho)}
    {\Gamma(\delta)\Gamma(\alpha-\delta)\Gamma(\delta+1-\alpha)\Gamma(1-\delta)}
    =
    \frac{c_-}{\alpha}\frac{k}{k+\frac{c_+}{\alpha}}
    .
  \]
\end{lemma}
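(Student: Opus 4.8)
The plan is to read the claimed parameters straight off the closed-form characteristic exponent \eqref{WHPsi} and then to verify that the two resulting quadruples are admissible for $\mathscr{O}$, which is exactly what allows us to interpret the two factors as the Wiener--Hopf factors. The first step is pure pattern-matching: grouping the numerator and denominator of \eqref{WHPsi} according to whether they carry $-i\theta$ or $+i\theta$, one reads off
\begin{equation*}
  \Psi(\theta)
  =
  \mathrm{B}(\alpha\rho,\alpha,\delta,\alpha-\delta;-i\theta)\,
  \mathrm{B}(1-\alpha\rho,1,\delta+1-\alpha,1-\delta;i\theta),
\end{equation*}
which identifies the tuples $(\mathtt{a},\mathtt{b},\mathtt{c},\mathtt{d})$ and $(\hat{\mathtt{a}},\hat{\mathtt{b}},\hat{\mathtt{c}},\hat{\mathtt{d}})$ in the statement. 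No computation beyond recognising the shape of $\mathrm{B}$ is needed here.

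The crux is the second step: checking that both quadruples lie in $\mathscr{O}$, so that \cite[Corollary~2.2]{DHG} applies and each factor is genuinely the Laplace exponent of a subordinator. Both memberships follow from the first membership condition for $\mathscr{O}$ (condition \ref{i:O-member-1}) taken with $n=0$, using only the inequalities $\max(0,\alpha-1)\le\delta<\min(\alpha\rho,\alpha\hat{\rho})$ established in the preceding lemma. For $(\alpha\rho,\alpha,\delta,\alpha-\delta)$ the required chain $\delta\le\alpha\rho\le\alpha-\delta\le\alpha\le\delta+1$ unpacks to $\delta\le\alpha\rho$, $\delta\le\alpha\hat{\rho}$, $\delta\ge0$ and $\delta\ge\alpha-1$ respectively, and the tuple $(1-\alpha\rho,1,\delta+1-\alpha,1-\delta)$ reduces to the same four inequalities (in particular $\hat{\mathtt{c}}=\delta+1-\alpha\ge0$ and $\hat{\mathtt{d}}=1-\delta>0$ are exactly $\delta\ge\alpha-1$ and $\delta<1$, the latter automatic since $\min(\alpha\rho,\alpha\hat{\rho})<1$). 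Both tuples being admissible, $\Psi$ is a double hypergeometric characteristic exponent, and by the uniqueness of the factorisation \eqref{kappa4phi} the two $\mathrm{B}$ functions must be the ascending and descending ladder height exponents, yielding \eqref{kapp} and \eqref{kapn}.

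Finally I would pin down the killing rate. Since $q=\Psi(0)$, evaluating \eqref{WHPsi} at $\theta=0$ gives the gamma-function expression for $q$ at once. To reconcile it with $\tfrac{c_-}{\alpha}\cdot\tfrac{k}{k+c_+/\alpha}$, I would read $\Psi(0)$ off the structural decomposition $\Psi=\Psi^{*}+(1-p)\Psi^{cpp}-(1-p)\tfrac{c_-}{\alpha}$ used in the proof of the preceding corollary: since $\Psi^{cpp}(0)=0$ (a compound Poisson process carries no killing) and $\Psi^{*}(0)=\tfrac{c_-}{\alpha}$ (the Lamperti transform of the stable process killed on passing below zero is killed at this rate, the statement dual to the one recorded in the Killing probability lemma), one obtains $q=\Psi(0)=p\,\tfrac{c_-}{\alpha}$, which is the second form after substituting $p=\tfrac{k}{c_+/\alpha+k}$. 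Equivalently, $q=p\,c_-/\alpha$ can be verified directly from the trigonometric form \eqref{psi4sin} at $\theta=0$ together with the reflection formula and the definition of $c_-$. I expect the only genuinely delicate point to be the $\mathscr{O}$-membership bookkeeping, which is precisely where the constraints on $\delta$ are indispensable; everything else amounts to matching gamma factors and evaluating at zero.
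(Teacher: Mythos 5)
Your proposal is correct and follows essentially the same route as the paper: read the parameters off \eqref{WHPsi}, verify that both quadruples satisfy condition (i) for membership of $\mathscr{O}$ with $n=0$ using the inequalities $\max(0,\alpha-1)\le\delta<\min(\alpha\rho,\alpha\hat{\rho})$, and invoke Corollary~2.2 of \cite{DHG} to obtain the Wiener--Hopf factors. Your additional verification of $q=\Psi(0)=p\,c_-/\alpha$ via the structural decomposition is sound and slightly more explicit than the paper, which simply asserts the value.
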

\begin{proof}
  The interval for $\delta$ given in \eqref{e:gamma-ineq} implies that
  both
  $(\mathtt{a},\mathtt{b}, \mathtt{c}, \mathtt{d})$
  and $(\hat{\mathtt{a}}, \hat{\mathtt{b}}, \hat{\mathtt{c}},\hat{ \mathtt{d}})$
  satisfy
  condition \ref{i:O-member-1} for membership of $\mathscr{O}$,
  with $n=0$.

  It follows from Corollary~2.1 in \cite{DHG}
  that the Lévy process $\xi$ with 
  characteristic exponent
  \begin{align*}
    \Psi(\theta)
    & =
    \mathrm{B}(\mathtt{a}, \mathtt{b}, \mathtt{c}, \mathtt{d}; -i\theta) 
    \mathrm{B}(\hat{\mathtt{a}},\hat{\mathtt{b}}, \hat{\mathtt{c}}, \hat{\mathtt{d}}; i\theta),
    \\
    &=
    \frac{\Gamma(\alpha\rho-i\theta)\Gamma(\alpha-i\theta)
    \Gamma(1-\alpha\rho+i\theta)\Gamma(1+i\theta)}
    {\Gamma(\delta-i\theta)\Gamma(\alpha-\delta-i\theta)
    \Gamma(\delta+1-\alpha+i\theta)\Gamma(1-\delta+i\theta)}
  \end{align*}
  exists as a member of the double hypergeometric class.
  Comparing with \eqref{WHPsi} shows that this identifies our process $\xi$,
  and the result of \cite{DHG} yields the stated Wiener-Hopf factorisation.
\end{proof}

When $k=0$, the process $\xi$ is a hypergeometric Lévy process
in the simple ($\alpha\le 1$) or extended ($\alpha > 1$) class,
as described in \cite[\S 5]{KPW}. The expressions given in the above result
for its Wiener-Hopf factors remain valid in this case, though they can
be simplified further.

\subsection{The optimal stopping problem as a perpetual call option}
In this part, we derive the solutions for the optimal stopping problem
\begin{align}
  w(y)=\sup_{\sigma\in \mathscr{S}_{\mathbb{G}}}\mathbb{E}_{y}\left [\left (e^{r\xi_{\sigma}}-K\right )^{+}\right ],\label{callop}
\end{align}
where $r\in \mathbb{R} \setminus \{0\}$,
and $\mathscr{S}_{\mathbb{G}}$ indicates the set of all stopping times 
with respect to $\mathbb{G} = (\mathscr{G}_t)_{t\ge 0}$, the
natural enlargement of $(\sigma(\xi_s, s\le t))_{t\ge 0}$, defined as in
\autoref{s:kpcsp}.
This is a perpetual American call option in which the underlying is
the process $e^{r\xi}$, and has been addressed in \cite{ME}.

The Wiener-Hopf factors $\kappa$ and $\hat{\kappa}$ of $\xi$ appear
as components in the solution.
We note that
$\kappa(q,z)$ is a well-defined holomorphic function
for $\tRe z > -\alpha\rho$, and the same applies to $\hat\kappa(q,z)$
for $\tRe z > \alpha\rho-1$.

\begin{theorem}\label{t:main-lamperti}
  Consider the optimal stopping problem \eqref{callop}.
  \begin{enumerate}
    \item\label{i:xi:a} When $0< r<\delta$,  the solution is given by
      \begin{align}
        w(y)
        =
        \frac{1}{\mathbb{E}[e^{r\bar{\xi}_{\zeta-}}]}
        \mathbb{E}
        \left [
          \left (
            e^{r(y+\bar{\xi}_{\zeta-})}-K\mathbb{E}\left [e^{r\bar{\xi}_{\zeta-}}\right ]
          \right )^{+}
        \right ],
      \end{align}
      where $\zeta$ is the killing time of the process $\xi$,
      which has exponential distribution with rate $q$,
      and $\bar{\xi}_{t}=\sup_{0\leq s\leq t}\xi_{s}$.
      We remark that $\mathbb{E}\left [e^{r\bar{\xi}_{\zeta-}}\right ]=\frac{\kappa (q,0)}{\kappa(q,-r)}$. The optimal stopping time is given by
      \[
        \sigma^{*}=\inf \{t\geq 0: \xi_{t}\geq c^{*}\}
      \]
      where
      \[
        c^{*}=\frac{1}{r} \log K \frac{\kappa(q,0)}{\kappa(q,-r)}.
      \]
    \item\label{i:xi:b} 
      When $-(\delta+1-\alpha)<r<0$,  the solution is given by 
      \begin{align}
        w(y)
        =\frac{1}{\mathbb{E}[e^{r\underline{\xi}_{\zeta-}}]}
      \mathbb{E}\left [
        \left (
          e^{r(y+\underline{\xi}_{\zeta-})}-K\mathbb{E}\left [e^{r\underline{\xi}_{\zeta-}}\right ]
        \right )^{+}
      \right ]
      \end{align}
      where $\underline{\xi}_{t}=\inf_{0\leq s\leq t}\xi_{s}$, and
      $\mathbb{E}\left [e^{r\underline{\xi}_{\zeta-}}\right ]
      =\frac{\hat{\kappa} (q,0)}{\hat{\kappa}(q,r)}$.
      The optimal stopping time is given by
      \[
        \sigma^{*}=\inf \{t\geq 0: \xi_{t}\leq -c^{*}\}
      \]
      where
      \[
        c^{*}
        =
        \frac{1}{\lvert r\rvert}\log K \frac{\hat{\kappa}(q,0)}{\hat{\kappa}(q,r)}.
      \]
    \item\label{i:xi:c} 
      When $r>\delta$ or $r < -(\delta+1-\alpha)$,
      $w(y)=\infty$ for all $y$.
  \end{enumerate}
\end{theorem}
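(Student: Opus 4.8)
The plan is to read \eqref{callop} as a perpetual American call option written on the exponential of the killed Lévy process $\xi$, and to solve it by appealing to the treatment of such options in \cite{ME}. With the standing convention that the gain vanishes at the cemetery state, killing of $\xi$ at rate $q$ plays exactly the role of a discount factor $e^{-qt}$, so \eqref{callop} is a discounted perpetual call whose value can be written in terms of the law of the running extremum of $\xi$ sampled at the independent killing time $\zeta$. All the model-specific content then reduces to reading off the relevant quantities from the Wiener-Hopf factors $\kappa$ and $\hat{\kappa}$ obtained in the preceding lemma, and to locating the three regimes from the strips in which those factors are finite and positive.

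For part \ref{i:xi:a}, where $r>0$, the payoff $(e^{r\xi}-K)^{+}$ is increasing in $\xi$, so I expect the continuation region to be a half-line and the optimal rule to be the up-crossing $\sigma^{*}=\inf\{t\ge 0:\xi_t\ge c^{*}\}$. The general solution represents the value as $w(y)=\mathbb{E}\bigl[(e^{ry}e^{r\bar{\xi}_\zeta}/\mathbb{E}[e^{r\bar{\xi}_\zeta}]-K)^{+}\bigr]$, which is exactly the stated formula, and it is meaningful precisely when $\mathbb{E}[e^{r\bar{\xi}_\zeta}]<\infty$. The Wiener-Hopf factorisation gives $\mathbb{E}[e^{-\beta\bar{\xi}_\zeta}]=\kappa(q,0)/\kappa(q,\beta)$, so $\mathbb{E}[e^{r\bar{\xi}_\zeta}]=\kappa(q,0)/\kappa(q,-r)$. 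Substituting $\kappa(q,z)=\Gamma(\alpha\rho+z)\Gamma(\alpha+z)/[\Gamma(\delta+z)\Gamma(\alpha-\delta+z)]$ and using $\delta<\min(\alpha\rho,\alpha\hat{\rho})\le\alpha/2$ from \eqref{e:gamma-ineq}, all four arguments at $z=-r$ are positive as long as $r<\delta$, whereas as $r\uparrow\delta$ the factor $\Gamma(\delta-r)$ diverges and the moment blows up; thus $r<\delta$ is the single binding constraint, which simultaneously delimits this regime and keeps the value finite. The threshold is then pinned down by matching the candidate value to the gain, i.e.\ by $e^{rc^{*}}=K\,\mathbb{E}[e^{r\bar{\xi}_\zeta}]$, which is the stated $c^{*}$.

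Part \ref{i:xi:b}, with $r<0$, is the mirror image: the payoff now decreases in $\xi$, the optimal rule should be the down-crossing $\sigma^{*}=\inf\{t\ge 0:\xi_t\le-c^{*}\}$, and one applies the same machinery to the dual $-\xi$, which interchanges $\kappa$ with $\hat{\kappa}$ and the supremum with the infimum. The relevant moment is $\mathbb{E}[e^{r\underline{\xi}_\zeta}]=\hat{\kappa}(q,0)/\hat{\kappa}(q,r)$, obtained by continuing $\mathbb{E}[e^{-\beta(-\underline{\xi}_\zeta)}]=\hat{\kappa}(q,0)/\hat{\kappa}(q,\beta)$ to $\beta=r$; with $\hat{\kappa}(q,z)=\Gamma(1-\alpha\rho+z)\Gamma(1+z)/[\Gamma(\delta+1-\alpha+z)\Gamma(1-\delta+z)]$ the binding constraint becomes $\delta+1-\alpha+r>0$, i.e.\ $r>-(\delta+1-\alpha)$, and the value and $c^{*}$ follow as before. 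For part \ref{i:xi:c}, where the relevant moment is infinite, I would establish $w\equiv\infty$ by a direct lower bound along the up-crossing times $\sigma_c=\inf\{t\ge 0:\xi_t\ge c\}$: for $r>\delta$ and $c$ large enough that $e^{rc}>K$, using $\xi_{\sigma_c}\ge c$ on $\{\sigma_c<\zeta\}$ gives $w(y)\ge(e^{rc}-K)\,\mathbb{P}_y(\bar{\xi}_\zeta\ge c)$, while the explicit transform $\kappa(q,0)/\kappa(q,\cdot)$ has its rightmost singularity at $-\delta$, yielding the exponential tail $\mathbb{P}_y(\bar{\xi}_\zeta\ge c)\sim C e^{-\delta c}$ with $C>0$; hence $w(y)\gtrsim e^{(r-\delta)c}\to\infty$. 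The case $r<-(\delta+1-\alpha)$ is handled symmetrically through the infimum and $\hat{\kappa}$.

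The main obstacle, to my mind, is not the algebra but making the reduction watertight. One must verify that the hypotheses of the call-option solution of \cite{ME} genuinely apply to our \emph{killed} process --- in particular the identification of killing with discounting, the law of the extrema $\bar{\xi}_\zeta,\underline{\xi}_\zeta$, and the fact that the admissible class $\mathscr{S}_{\mathbb{G}}$ for the enlarged filtration is the correct one --- and, if a self-contained argument is preferred, one must carry out the optimality verification directly: showing that the candidate $w$ dominates the gain $(e^{ry}-K)^{+}$ and that $(e^{-qt}w(\xi_t))_{t\ge 0}$ is a supermartingale, so that the proposed first-passage rule is genuinely optimal rather than merely a stationary point of the formula. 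The inequalities in \eqref{e:gamma-ineq} are exactly what guarantee that the strips of finiteness of $\kappa$ and $\hat{\kappa}$ are non-degenerate, so that the entire trichotomy is governed by the position of $r$ relative to $\delta$ and $-(\delta+1-\alpha)$.
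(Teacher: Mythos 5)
Your proposal follows essentially the same route as the paper: both reduce \eqref{callop} to Mordecki's perpetual-call theorem for (killed) Lévy processes, identify killing at rate $q$ with discounting, read off $\mathbb{E}[e^{r\bar{\xi}_\zeta}]$ and $\mathbb{E}[e^{r\underline{\xi}_\zeta}]$ from the Wiener--Hopf factors, and locate the regime boundaries at $\delta$ and $-(\delta+1-\alpha)$ via the first zero of $\Psi(-\iu\cdot)$ (equivalently, the first singularity of the relevant Gamma quotient, as you phrase it). The only divergence is in part (iii), where the paper simply cites Mordecki's Theorem~1(c)--(d) (arbitrarily large values at deterministic times) while you sketch a direct first-passage lower bound via the exponential tail of $\bar{\xi}_\zeta$; both are sound.
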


\begin{proof}
  Theorem~1 in \cite{ME} provides a solution for the valuation of the perpetual
  American call option for a general Lévy process, expressed in terms of the
  moment generating function of its supremum.
  The proof follows from an application of this result, the main prerequisite
  for which is to check the inequality
  $\EE e^{r\xi_1} < 1$.

  \begin{enumerate}
    \item
      Take $r>0$.
      Condition $\EE e^{r\xi_1} < 1$ is equivalent to $\Psi(-ir) < 0$,
      where $\Psi$ is the characteristic exponent of the killed
      Lévy process $\xi$.

      The first zero of $\Psi(-ir)$ occurs at $\delta$, and so
      in this case
      $\EE e^{r \xi_1} < 1$ if and only if $0<r<\delta$.

      The value function $w$ is now expressed in terms of the moment
      generating function of the overall supremum of $r\xi$, which is
      given by \cite[Theorem~6.15]{Kyprianou} in terms of the Wiener-Hopf factor:
      \[
        \mathbb{E}\left [e^{r\bar{\xi}_{\zeta-}}\right ]
        =\frac{\kappa(q,0)}{\kappa(q,-r)}.
      \] 
      The result follows from \cite{ME}.

    \item
      Take now $r<0$. In this case, $\EE e^{r \xi_1} < 1$ if and only
      if $0< -r < \delta+1-\alpha$. Since $r<0$, the value function
      is expressed in terms of
      \[
        \mathbb{E}[e^{r\underline{\xi}_{\zeta-}}]
        =
        \frac{\hat{\kappa}(q,0)}{\hat{\kappa}(q,r)},
      \]
      and the result again follows from \cite{ME}.

    \item
      In this case, it holds that $\EE[e^{r\xi_1}] > 1$, and so the
      result follows from \cite[Theorem~1(c) and~(d)]{ME}, where
      Mordecki shows that arbitrarily large values can be obtained
      by stopping at deterministic times. \qedhere
  \end{enumerate}
\end{proof}

\begin{corollary}\label{c:w}
  When $0<r<\delta$, we can express
  \[
    w(y)
    =
    \kappa(q,-r)
    \int_0^\infty
    \left(e^{r(y+z)} - K \frac{\kappa(q,0)}{\kappa(q,-r)}\right)^+
    u(z)\, \dd z,
    \quad y \in \RR,
  \]
  where
  \[
    u(x)
    =
    \frac{1}{\Gamma(\alpha\rho)}
    e^{-\delta x} (1-e^{-x})^{\alpha\rho-1}
    \HG(\delta-\alpha\rhoh, \delta, \alpha\rho, 1-e^{-x})
    , \quad x \ge 0,
  \]
  and $\HG$ is the hypergeometric function.
  When $-(\delta+1-\alpha)<r<0$, we have
  \[
    w(y)
    =
    \hat{\kappa}(q,r)
    \int_0^\infty
    \left( e^{r(y-z)} - K \frac{\hat{\kappa}(q,0)}{\hat{\kappa}(q,r)} \right)^+
    \hat{u}(z)\, \dd z,
    \quad y\in \RR,
  \]
  where
  \[
    \hat{u}(x)
    =
    \frac{1}{\Gamma(\alpha\hat{\rho})}
    e^{-(1-\delta)x}
    (1-e^{-x})^{\alpha\hat{\rho}-1}
    \HG(\alpha-\delta, \alpha\rhoh-\delta, \alpha\rhoh, 1-e^{-x}),
    \quad x \ge 0.
  \]
\end{corollary}
\begin{proof}
  The random variable $\bar{\xi}_{\zeta-}$ has Laplace transform given by
  \[
    \lambda \mapsto \frac{\kappa(q,0)}{\kappa(q,\lambda)}, 
    \quad \tRe \lambda > \max(-\delta, \delta-\alpha),
  \]
  and the double beta subordinator with Laplace exponent $\kappa(q,\cdot)$ has
  a renewal density $u$ whose Laplace transform is given by
  $\lambda \mapsto \frac{1}{\kappa(q,\lambda)}$.
  The function $u$ is the convolution of the two functions
  \begin{align*}
    u_1(x) 
    &= \frac{1}{\Gamma(\delta)} 
    e^{-(\alpha-\delta)x}(1-e^{-x})^{\delta-1}, 
    \quad x\ge0, \text{ and}\\
    u_2(x) &= \frac{1}{\Gamma(\alpha\rho-\delta)} 
    e^{-\delta x}(1-e^{-x})^{\alpha\rho-\delta-1},
    \quad x\ge0,
  \end{align*}
  which are the renewal densities of Lamperti-stable subordinators 
  corresponding, respectively, to the Laplace exponents
  $\lambda\mapsto \frac{\Gamma(\alpha\rho+\lambda)}{\Gamma(\delta+\lambda)}$
  and $\lambda \mapsto \frac{\Gamma(\alpha+\lambda)}{\Gamma(\alpha-\delta+\lambda)}$.
  Computing the convolution of $u_1$ and $u_2$ yields the expression
  in the statement.

  The proof in the case $r<0$ is very similar, but we instead need to compute
  the convolution of functions
  \begin{align*}
    \hat{u}_1(x)
    &=
    \frac{1}{\Gamma(\delta)} (e^x-1)^{\delta-1},
    \quad x \ge 0, \text{ and } \\
    \hat{u}_2(x)
    &=
    \frac{1}{\Gamma(\alpha\rhoh-\delta)}
    e^{-(1-\alpha+\delta)x}(1-e^{-x})^{\alpha\rhoh-\delta-1},
    \quad x \ge 0,
  \end{align*}
  in order to compute the density of $\underline{\xi}_{\zeta-}$.
\end{proof}
Expressions for $u$ and $\hat{u}$ in general (as potential densities of
a double beta subordinator) appear in \cite[Theorem~2.1]{DHG}, but
for our particular parameter set, the ones above are simpler.

\section{Solution of the optimal stopping problem}
\label{s:os-sol}

In this section, we return to our original problem \eqref{ogp4X}, which we can
rewrite as
\begin{equation}\label{e:os-Xdag}
  v(x) = \sup_{\tau \in \mathscr{S}_{\mathbb{F}^\dag}} [ g(X^\dag_t) ],
\end{equation}
where
$\mathscr{S}_{\mathbb{H}}$ represents the set of stopping times associated
with some filtration $\mathbb{H}$, and, as defined earlier,
$\mathbb{F}^\dag$ is the natural enlargement of the filtration of $X^\dag$.
We also recall the gain function $g$ defined
in the introduction:
\begin{equation*}
  g(x)
  =
  \begin{cases}
    (x^{r}-K)^{+}, & x > 0\\
    0, & x \le 0.
  \end{cases}
\end{equation*}
The reason for our introduction of the notation $\mathscr{S}_{\mathbb{H}}$
is that we will have cause to work with several filtrations as we connect
\eqref{e:os-Xdag} to the problem we considered in \autoref{s:os-lamperti}.

The following lemma will be important in understanding the structure
of the solution of the optimal stopping problem.
\begin{lemma}\label{l:equiv}
  When $-(\delta+1-\alpha)< r<\delta$, 
  $\mathbb{E}_{x}\left [\sup_{t\ge 0}g\left (X_{t}^\dag\right )\right ]<\infty$
  for all $x \in \RR$.
\end{lemma}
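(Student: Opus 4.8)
The plan is to reduce the supremum taken along $X$ to one taken along the censored process $Y$, and then to control the latter through the extremal moment generating functions of the Lamperti--Lévy process $\xi$ identified, via the Wiener--Hopf factors \eqref{kapp}--\eqref{kapn}, in \autoref{t:main-lamperti}. The starting observation is that $g$ vanishes on $(-\infty,0)$ and satisfies $0\le g(x)\le x^{r}$ for every $x>0$. Since $Y$ is built by concatenating the non-negative sections of $X^\dag$ up to the killing time, the values visited by $X$ in $[0,\infty)$ before $T$ coincide with those visited by $Y$ before it reaches the cemetery state $0$; writing $\zeta_Y$ for the lifetime of $Y$, this yields
\[
  \sup_{t\ge 0} g(X_t)\Ind_{\{t<T\}}
  = \sup_{0\le t<\zeta_Y} g(Y_t)
  \le \sup_{0\le t<\zeta_Y} Y_t^{r}.
\]
Through the Lamperti transform \eqref{lamperti}, under which $\zeta_Y$ corresponds to the exponential killing time $\zeta$ of rate $q$ and the range of $Y$ is the exponential of the range of $\xi$, the monotonicity of $x\mapsto x^{r}$ turns the right-hand side into $e^{r\bar\xi_\zeta}$ when $r>0$ and into $e^{r\underline{\xi}_\zeta}$ when $r<0$.

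First I would dispatch the case $x\ge 0$. For $x>0$ the process $Y$ starts from $Y_0=x$, so $\xi$ starts from $\log x$; using the stationarity of the increments of $\xi$ together with the identities $\EE[e^{r\bar\xi_\zeta}]=\kappa(q,0)/\kappa(q,-r)$ and $\EE[e^{r\underline{\xi}_\zeta}]=\hat\kappa(q,0)/\hat\kappa(q,r)$ from \autoref{t:main-lamperti}, the bound above gives
\[
  \EE_x\Bigl[\sup_{t\ge 0} g(X_t)\Ind_{\{t<T\}}\Bigr]
  \le
  \begin{cases}
    x^{r}\,\kappa(q,0)/\kappa(q,-r), & r>0,\\
    x^{r}\,\hat\kappa(q,0)/\hat\kappa(q,r), & r<0.
  \end{cases}
\]
By \eqref{kapp}--\eqref{kapn}, $\kappa(q,-r)$ is finite and non-zero exactly for $0<r<\delta$, while $\hat\kappa(q,r)$ is finite and non-zero exactly for $-(\delta+1-\alpha)<r<0$, so the right-hand side is finite throughout the stated range. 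The case $x=0$ is immediate, since $X$ is then killed at once and the supremum is $0$.

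It remains to treat $x<0$, which I expect to be the crux. Here $g(X_t)=0$ until the first passage time $\sigma=\inf\{t\ge 0:X_t\ge 0\}$, so the supremum vanishes on $\{\sigma\ge T\}$. On $\{\sigma<T\}$, the strong Markov property at $\sigma$ together with the lack of memory of the omega-clock (conditionally on its not having fired before $\sigma$) shows that the conditional expectation of the supremum given $\mathscr{F}_\sigma$ equals $h(X_\sigma)$, where $h(y)=\EE_y[\sup_{t\ge 0}g(X_t)\Ind_{\{t<T\}}]$ is precisely the quantity already bounded by $Cy^{r}$ in the previous step. Since the overshoot $X_\sigma$ is almost surely strictly positive, this reduces the question to a single moment of the overshoot of $X$ above the level $0$:
\[
  \EE_x\Bigl[\sup_{t\ge 0} g(X_t)\Ind_{\{t<T\}}\Bigr]
  \le
  C\,\EE_x\bigl[X_\sigma^{r}\bigr].
\]

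Finally I would invoke the explicit first-passage law: under $\PP_x$ with $x<0$, the hitting position $X_\sigma$ of $[0,\infty)$ has on $(0,\infty)$ the density $\tfrac{\sin(\pi\alpha\rho)}{\pi}\,|x|^{\alpha\rho}y^{-\alpha\rho}(y+|x|)^{-1}$, whose tails $y^{-\alpha\rho}$ at $0$ and $y^{-\alpha\rho-1}$ at infinity give $\EE_x[X_\sigma^{r}]<\infty$ precisely for $\alpha\rho-1<r<\alpha\rho$. The constraints $\delta<\min(\alpha\rho,\alpha\hat\rho)$ in \eqref{e:gamma-ineq} yield $\alpha\rho-1<\alpha-1-\delta$ and $\delta<\alpha\rho$, so the stated interval $-(\delta+1-\alpha)<r<\delta$ sits strictly inside $(\alpha\rho-1,\alpha\rho)$ and this moment is finite with room to spare. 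The main obstacle is thus the $x<0$ analysis: justifying the reduction to the overshoot through the strong Markov property and the memorylessness of the clock, and then identifying the overshoot law. It is worth stressing that the sharp constraint on $r$ comes entirely from the finiteness of the extremal moment generating functions, i.e.\ from the Wiener--Hopf factors of \autoref{t:main-lamperti}, while the overshoot imposes only the weaker restriction $(\alpha\rho-1,\alpha\rho)$.
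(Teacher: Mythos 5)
Your proposal is correct and follows essentially the same route as the paper: reduce the case $x>0$ to the all-time extrema of the Lamperti--Lévy process $\xi$ via the censored process $Y$, handle $x<0$ by the strong Markov property at the first passage above zero together with Rogozin's overshoot law, and treat $x=0$ trivially. The only (immaterial) difference is that you certify finiteness of $\mathbb{E}\bigl[\sup_t e^{r\xi_t}\bigr]$ directly from the Wiener--Hopf identities $\mathbb{E}[e^{r\bar\xi_\zeta}]=\kappa(q,0)/\kappa(q,-r)$ and $\mathbb{E}[e^{r\underline{\xi}_\zeta}]=\hat\kappa(q,0)/\hat\kappa(q,r)$, whereas the paper invokes Mordecki's criterion $\mathbb{E}[e^{r\xi_1}]<1$; these are equivalent in the stated range of $r$.
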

\begin{proof}
  Assume initially that $x>0$.
  First, observe that $\sup_{t\ge 0} g(X_t^\dag)$
  can only be attained at a time when $X$ is positive. The time-change
  by $C^{-1}$, which yields $Y$, does not remove any such times.
  Applying this and an elementary bound for $g$, we obtain:
  \begin{align}
    \mathbb{E}_{x}\left [\sup_{t\ge 0}g({X}^\dag_{t})\right ]
    =\mathbb{E}_{x}\left [\sup_{t \ge 0}g(Y_{t})\right ]
    \le \mathbb{E}_{x}\left [\sup_{t \ge 0}Y_{t}^r \Ind_{\{Y_t\ne 0\}} \right ].
    \label{X2Y}
  \end{align}
  Given a path of $Y$ starting at $x$, the corresponding path of $\xi$ starts at 
  $\log x$ and is a deterministic space and time transformation,
  which implies that
  \begin{align}
    \mathbb{E}_{x}\left [\sup_{t \ge 0}Y_{t}^r \Ind_{\{Y_t\ne 0\}} \right ]
    = \mathbb{E}_{\log x}\left [\sup_{s \ge 0}e^{r\xi_{s}}\right ].
    \label{Y2xi}
  \end{align}
  We next need to show that the right hand side of \eqref{Y2xi} is finite.
  For this, we recall that by Theorem~1(a) in \cite{ME},
  $\mathbb{E}_{0}\left [\sup_{s \ge 0} e^{r\xi_{s}}\right ]<\infty$ 
  is true if $\mathbb{E}_{0}\left [e^{r\xi_{1}}\right ]<1$ holds, 
  and as outlined in the proof of \autoref{t:main-lamperti},
  this can be reduced to the condition that 
  $-(\delta+1-\alpha)< r<\delta$.
  This completes the proof in the case $x>0$.

  We now consider the possibility that $x<0$. In this case,
  the supremum in question can only
  be positive if it occurs
  after the time $\tau_0^+$ at which $X$ returns above zero.
  This together with the scaling property and a bound on $g$ implies that
  \[
    \EE_x\Bigl[ \sup_{t\ge 0} g(X^\dag_t)\Bigr]
    \le \EE_x \Bigl[ 
      \EE_{X_{\tau_0^+}}\Bigl[ \sup_{t\ge 0} (X^\dag_t)^r \Ind_{\{X_t>0\}} \Bigr]
    \Bigr]
    =
    \EE_x\bigl[ X_{\tau_0^+}^r \bigr] \EE_1\Bigl[\sup_{t\ge 0} (X^\dag_t)^r \Ind_{\{X_t>0\}}\Bigr]
  \]
  The second expectation is equal to the left-hand side of \eqref{Y2xi} at $x=1$,
  and hence is finite.
  For the first expectation, a result of \citet[Lemma~3]{Rog72} is useful; 
  note that in applying it,
  we correct for the error in that lemma ($q$, which is denoted $\rho$ here, should be
  $1-q$). This gives:
  \[
    \EE_x\bigl[ X_{\tau_0^+}^r \bigr]
    = \frac{\sin\pi\alpha\rho}{\pi} (-x)^{\alpha\rho}
    \int_0^\infty y^{r-\alpha\rho}(y-x)^{-1} \, \dd y,
  \]
  and thanks to the conditions on $r$ given in the statement,
  this is finite for all $x$.

  If $x=0$, then $X^\dag_t = 0$ and the result follows.
\end{proof}

This lemma leads to a key theorem about the stopping problem.
\begin{theorem}\label{t:os-theory}
  When $0<r<\delta$
  or
  $-(\delta+1-\alpha)<r<0$,
  there exists a set $D\subset \left (0, \infty \right )$ such that
  \[
    \tau_{D}\coloneqq \inf \{t\geq 0 :  X^\dag_{t}\in D\}
  \]
  is an optimal stopping time in \eqref{e:os-Xdag}.
\end{theorem}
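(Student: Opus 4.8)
The plan is to recast \eqref{ogp4X} as a standard optimal stopping problem for a Markov process and then appeal to the general theory of optimal stopping for strong Markov processes. Since the exponential clock $\mathrm{e}_1$ is independent of $X$ and $\tau$ is adapted to $\mathbb{F}$, conditioning on $\mathscr{F}_\tau$ gives $\PP_x(\tau<T\mid\mathscr{F}_\tau)=e^{-A_\tau}$, whence $\EE_x[g(X_\tau)\Ind_{\{\tau<T\}}]=\EE_x[e^{-A_\tau}g(X_\tau)]$. Equivalently, extending $g$ by zero to the cemetery state, $v(x)=\sup_\tau\EE_x[g(X^\dag_\tau)]$ is the value of an optimal stopping problem for the omega-killed process $X^\dag$, a standard Markov process. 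The integrability hypothesis needed to run the general theory, namely $\EE_x[\sup_{t\ge0}g(X^\dag_t)]<\infty$ for every $x$, is precisely \autoref{l:equiv}, whose range $-(\delta+1-\alpha)<r<\delta$ contains both regimes in the statement.

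Granted this finiteness, the general theory (as developed, for instance, by Peskir and Shiryaev) identifies $v$ as the least superharmonic (excessive) majorant of $g$ and, under suitable regularity, shows that the first entrance time $\tau_D=\inf\{t\ge0:X^\dag_t\in D\}$ into the contact set $D=\{x:v(x)=g(x)\}$ is optimal. To deploy it I would check the attendant hypotheses: that the underlying process is a nice (Feller, quasi-left-continuous) standard process, which is routine for the stable process and its omega-killed version, and that the payoff has the required semicontinuity. On $(0,\infty)$ the function $g$ is continuous, so the only delicate behaviour occurs at and below the origin.

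It then remains to confine $D$ to $(0,\infty)$. For $x<0$ we have $g(x)=0$, whereas the process started from a negative point has positive probability of jumping back into $(0,\infty)$ before the omega-clock kills it and of attaining a strictly positive gain thereafter; hence $v(x)>0=g(x)$ and every such $x$ lies in the continuation region. The origin is excluded because $X^\dag$ started from $0$ is killed at once, while for $x\ne0$ the process is killed before it can reach $0$ (recall $A_{T_0}=\infty$ when $k>0$), so $0$ never acts as a genuine stopping state. Thus $D\subset(0,\infty)$ and $\tau_D$ is the desired optimal stopping time.

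I expect the main obstacle to be not the integrability, which \autoref{l:equiv} already supplies, but the verification of the regularity conditions guaranteeing that first entrance into the contact set is genuinely optimal. In particular one must control the interaction of the killing, the cemetery at $0$, and, when $r<0$, the blow-up of $g$ as $x\downarrow0$, ensuring that these boundary effects neither destroy the lower semicontinuity of $v$ nor obstruct the general theorem, and that $D$ is closed so that $\tau_D$ is a bona fide stopping time attaining the supremum.
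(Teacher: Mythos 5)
Your overall strategy --- absorb the omega-clock into a Markovian formulation, invoke the general theory of optimal stopping to get optimality of the first entrance time into the contact set, and then argue that the contact set can be taken inside $(0,\infty)$ --- is the same as the paper's, which works with the two-dimensional process $(X,A)$ and gain $G(x,a)=e^{-a}g(x)$ rather than with the killed process $X^\dag$ directly (a cosmetic difference). Your closing argument that $D$ may be confined to $(0,\infty)$ is also essentially the one in the paper. However, the two places where you defer to ``attendant hypotheses'' are exactly where the paper's proof does its real work, and you have not supplied them, so the proposal has genuine gaps. First, the lower semicontinuity of the value function is not routine and does not follow from general Feller-type properties: the paper proves it by hand, using the self-similarity of $X$ to write $\EE_x[G(X_\tau,A_\tau)]=\EE_{\sgn x}[G(\lvert x\rvert X_\tau,A_\tau)]$ for a near-optimal $\tau$ and then applying Fatou's lemma along a sequence $x_n\to x$, with a separate argument at $x=0$. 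Without this, the cited entrance-time theorem simply cannot be invoked.

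Second, and more seriously, your treatment of the regime $-(\delta+1-\alpha)<r<0$ is missing an essential idea. There the gain $g(x)=(x^r-K)^+\Ind_{\{x\ge 0\}}$ blows up as $x\downarrow 0$ while vanishing on $(-\infty,0)$, so it is \emph{not} upper semicontinuous, and the general theorem you want to apply fails for the problem as posed; noting the blow-up as ``delicate behaviour'' does not repair this. The paper's resolution is the Riesz--Bogdan--\.{Z}ak transformation: setting $X^\circ_t=1/X_{\theta(t)}$ with $\theta(t)=\bigl(\int_0^\cdot \lvert X_u\rvert^{-2\alpha}\,\dd u\bigr)^{-1}(t)$, one checks $A^\circ_t=A_{\theta(t)}$ and converts the problem into one with gain $G^\circ(x,a)=e^{-a}(x^{-r}-K)^+$, which is continuous since $-r>0$; the four conditions are then verified for the transformed problem and the stopping set is mapped back via $x\mapsto 1/x$. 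You would need either this transformation or some substitute device to make the $r<0$ case go through.
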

\begin{proof}
  We begin by considering the case $r > 0$,
  and we will apply \cite[Corollary~2.9]{GP}. That result is formulated
  for finite-horizon stopping problems, but since
  $\lim_{t\to\infty} X^\dag_t = 0$, Remark~2.10 in the same work notes that
  the application remains valid in this case.
  We remark that what we call a `stopping time' is known as a `Markov time'
  in \cite{GP}.

  Let
  \[
    D = \{ x \in \RR : v(x) = g(x)\} .
  \]
  The result of \cite{GP} states that the first entrance time $\tau_D$ of $X^\dag$
  into $D$ is optimal for \eqref{e:os-Xdag} if we can verify the following points:
  \begin{enumerate}
    \item
      $\EE_x[\sup_{t\ge 0} g(X_t^\dag)] < \infty$.
      This is \autoref{l:equiv}.
    \item
      $g$ is upper semicontinuous.
      This holds because $g$ is continuous when $r>0$.
    \item
      $v$ is lower semicontinuous.

      As mentioned in Section~\ref{s:kpcsp}, we can regard $X$
      under $\PP_x$ as being defined by
      $X_t = \lvert x \rvert \tilde{X}_{t\lvert x\rvert ^{-\alpha}}$,
      where $\tilde{X}$ has the law of $X$ started from $\sgn x$. This allows
      us to reduce the treatment of all the measures 
      $(\PP_x)_{x\in \RR\setminus \{0\}}$ to just that of $\PP_1$ and $\PP_{-1}$,
      and the identity extends also to $X^\dag$, since it too is self-similar.
      In particular, it means that we can write
      \[
        \EE_x [ g(X^\dag_\tau)] 
        = \EE_{\sgn x}[ g(\lvert x\rvert X^\dag_{\tau\lvert x\rvert^{-\alpha}})],
      \]
      for any $x\ne 0$ and random time $\tau$. This will be quite useful.

      Fix $x\ne 0$.
      Take $\epsilon > 0$, and let $\tau_\epsilon$ be a stopping time satisfying
      \[
        \EE_x[ g(X^\dag_{\tau_\epsilon})] \ge v(x) - \epsilon/2.
      \]
      Now, the right-continuity of $X^\dag$, the
      dominated convergence theorem and \autoref{l:equiv}
      together imply that
      \[
        \lim_{\eta \downarrow 0} \EE_x[ g(X^\dag_{\tau_\epsilon + \eta})] 
        = \EE_x[ g(X_{\tau_\epsilon}^\dag)],
      \]
      which means that, if we set $\tau_\epsilon' = \tau_\epsilon+\eta$
      for $\eta>0$ small enough,
      \[
        \EE_x[ g(X^\dag_{\tau_\epsilon'}) ] \ge v(x) - \epsilon.
      \]
      Since
      $\tau_\epsilon' = \lim_{\eta'\uparrow \eta} (\tau_\epsilon + \eta')$
      gives an expression in terms of an increasing limit of stopping times,
      and $X^\dag$ is quasi-left-continuous by \autoref{l:standard},
      it follows that $X^\dag$ is continuous at $\tau_\epsilon'$ almost surely.

      Take $(x_n)_{n\ge 0}$ to be a sequence converging to $x$ and, from now
      on, take $n$ sufficiently large that
      $\sgn x_n = \sgn x$. We first observe that
      \[
        v(x_n) \ge \EE_{x_n}[ g(X^\dag_{\tau'_\epsilon})]
        = \EE_{\sgn x}[ g(\lvert x_n\rvert X^\dag_{\lvert x_n\rvert^{-\alpha} \tau'_\epsilon})].
      \]
      Applying Fatou's lemma and using the continuity of $X^\dag$ at $\tau'_\epsilon$,
      we obtain:
      \begin{align*}
        \liminf_{n\to\infty} v(x_n)
        &\ge
        \EE_{\sgn x}[ \liminf_{n\to \infty} g(\lvert x_n\rvert X^\dag_{\lvert x_n\rvert^{-\alpha} \tau'_\epsilon})]
        \\
        &\ge
        \EE_{\sgn x}[
        \min\{
          g(\lvert x\rvert X^\dag_{\lvert x\rvert^{-\alpha} \tau'_\epsilon}),
          g(\lvert x\rvert X^\dag_{\lvert x\rvert^{-\alpha} \tau'_\epsilon- })
        \}
        ]
        \\
        &= \EE_x[ \min\{ g(X^\dag_{\tau'_\epsilon}), g(X^\dag_{\tau'_\epsilon -}) \}] \\
        &= \EE_x[ g(X^\dag_{\tau'_\epsilon})] \ge v(x) - \epsilon.
      \end{align*}
      Finally, letting $\epsilon\to 0$, we have
      \[
        \liminf_{n\to \infty} v(x_n) \ge v(x),
      \]
      which is the lower semicontinuity we require for $x\ne 0$.

      Take $x = 0$, a stopping time $\tau$
      and a sequence $(x_n)_{n\ge 0}$
      convenging to $0$.
      Then,
      \[
        \liminf_{n\to\infty} v(x_n)
        = \liminf_{n\to\infty} \EE_{x_n}[ g(X^\dag_\tau) ]
        \ge 0 = v(0).
      \]
      Therefore,
      $v$ is lower semicontinuous also at $0$, which completes the proof
      of this part.
  \end{enumerate}

  Now turn to the case $r<0$, in which $g$ is no longer continuous.
  Our approach will be to make a space-time transformation of $X$,
  known as the Riesz–Bogdan–Żak transformation \cite[Theorem~3.1]{Kyp-deep1}.
  Let
  $\theta(t) = \Bigl( \int_0^\cdot \lvert X_u\rvert^{-2\alpha} \, \dd u\Bigr)^{-1}(t)$,
  and define $X_t^\circ = 1/X_{\theta(t)}$.
  The associated probability measures are $\PP^\circ_x = \PP_{1/x}$
  for $x\ne 0$.

  The process $X^\circ$ is a Markov process which is self-similar of index $\alpha$,
  in the sense that it satisfies \eqref{e:scaling} for $y\in\RR\setminus\{0\}$.
  Note that $\theta^{-1}(T_0) = \infty$ a.s., so the paths of $X$ and
  $X^\circ$ are in correspondence up to $T_0$, the first time $X$ hits zero, if
  at all.

  It is not hard to show that 
  $A^\circ_t = \int_0^t \omega(X^\circ_s)\, \dd s = A_{\theta(t)}$,
  and we can define $T^\circ = \inf\{ t\ge 0: A^\circ_t > \mathrm{e}_1\}$,
  where $\mathrm{e}_1$ is the independent random variable of rate 1
  which was used in the definition of $T$.
  Note that $T = \theta(T^\circ)$.

  Let $X^{\circ\dag}_t = \frac{1}{X_{\theta(t)}} \Ind_{\{\theta(t) < T\}} = X^\circ_t\Ind_{\{t<T^\circ\}}$.
  For this process we can also define the probability measure $\PP^\circ_0$ under
  which $X^{\circ\dag}$ remains at $0$ for all time.
  In this way, we obtain a real self-similar Markov process of index $\alpha$,
  following essentially the same argument as in the proofs
  of \autoref{l:standard} and \autoref{p:ssmp}.

  Every stopping time $\tau$ for $\mathbb{F}^\dag$ 
  can be replaced by a stopping time
  $\tau' = \tau\wedge T_0$ without reducing the payoff, and
  $\theta^{-1}(\tau')$ is a stopping time for the filtration
  $\mathbb{F}^{\circ\dag} = \bigl( \mathscr{F}^\dag_{\theta(t)} \bigr)_{t\ge 0}$,
  which is also the natural enlargement of the filtration of $X^{\circ\dag}$.
  Defining
  \[
    g^\circ(x)
    =
    \begin{cases}
      (x^{-r} - K)_+, & x > 0, \\
      0, & x \le 0,
    \end{cases}
  \]
  this discussion gives us that
  \begin{equation}
    \label{e:vo-v}
    v^\circ(x) \coloneqq \sup_{\tau \in \mathscr{S}_{\mathbb{F}^{\circ\dag}}}
    \EE^\circ_x[ g^\circ(X_\tau) ]
    = v(1/x),
  \end{equation}
  for $x\ne 0$, and $v^\circ(0) = 0$.
  From this, we can use the result of \cite{GP} to conclude that the the stopping
  region for this new, equivalent optimal stopping problem is
  \[
    D^\circ = \{ x \in \RR : v^\circ(x) = g^\circ(x) \},
  \]
  provided we can verify the analogues of points (i--iii) above.

  Point (i) follows immediately, and point (ii) holds because $g^\circ$
  is continuous when $r<0$. Point (iii) must be verified, but the proof above
  works with essentially no change, since it uses only the fact that
  the process in question is self-similar, strong Markov and has no fixed
  jumps, and this is true for $X^{\circ\dag}$ just as for $X^\dag$.

  Defining $D = \{ 1/x : x \in D^\circ\setminus\{0\} \} \cup\{0\}$,
  it follows from \eqref{e:vo-v} and the definition of $X^{\circ\dag}$
  that the entrance time of $D$ is optimal for \eqref{e:os-Xdag}.

  Finally, it is simple to see that $0$ belongs to the sets $D$ defined
  above, since $v(0) = 0 = g(0)$ in all cases. However, this point, 
  and more importantly
  any points in $(-\infty,0]$,
  can
  be removed from $D$ without changing the value function, as we now show.
  Set $D' = D \cap (0,\infty)$, and then consider the following calculation:
  \begin{align*}
    \EE_x [ g(X_{\tau_D}) \Ind_{\{\tau_D<T\}} ]
    &= \EE_x [ g(X_{\tau_D}) \Ind_{\{\tau_D<T\}} \Ind_{\{ X_{\tau_D}>0\}}]
    + \EE_x [ g(X_{\tau_D}) \Ind_{\{\tau_D<T\}} \Ind_{\{X_{\tau_D}\le 0\}}] \\
    &= \EE_x [ g(X_{\tau_{D'}}) \Ind_{\{\tau_{D'}<T\}} \Ind_{\{ \tau_D = \tau_{D'} \}} ] + 0 \\
    &\le \EE_x [ g(X_{\tau_{D'}}) \Ind_{\{\tau_{D'}<T\}} ].
  \end{align*}
  From this it follows that, if $D \not\subset (0,\infty)$, we can replace it with $D'$ and obtain
  at least as good a value; indeed, since $\tau_D$ is optimal, the values
  obtained from $\tau_D$ and $\tau_{D'}$ will actually be equal.
\end{proof}

Our main result, which implies \autoref{t:main} appearing in the introduction,
is as follows. Recall that $T$ is the killing time of the process $X^\dag$.

\begin{theorem}\label{t:os-X-full}
  Let $x\in \RR\setminus\{0\}$.
  The solution of the optimal stopping problem \eqref{e:os-Xdag} is given as follows.
  \begin{enumerate}
    \item If $0 < r<\delta$, 
      then the optimal stopping time is given by
      \[\tau^{*}=\inf \{t\geq 0: X_{t}\geq b^{*}\}\]
      where
      \[
        b^{*}
        = \left(K \frac{\kappa(q,0)}{\kappa(q,-r)}\right)^{1/r}.
      \]
      Moreover $\mathbb{E}_1\!\left [\bar{X}_{T}^{r}\right ]<\infty$ 
      and the optimal value is given for $x>0$ by
      \begin{align*}
        v(x)
        =
        \frac{1}{\EE_1[\bar{X}_T^r]}
        \mathbb{E}_1\!\left [
          \left (
            \bigl(x\bar{X}_{T}\bigr)^{r}-K\mathbb{E}_1\!\left [\bar{X}_{T}^{r}\right ]
          \right )^{+}
        \right ],
      \end{align*}
      where $\bar{X}_{t}=\sup_{s\le t}X_{s}$. 
    \item 
      If $-(\delta+1-\alpha)<r<0$,
      then the optimal stopping time is given by
      \[\tau^{*}=\inf \{t\geq 0: 0<X_{t}\leq 1/{b^{*}}\}\]
      where
      \[
        b^{*}
        = \left(K \frac{\hat{\kappa}(q,0)}{\hat{\kappa}(q,r)}\right)^{1/\lvert r\rvert}.
      \]
      Moreover, $\mathbb{E}_1\!\left [J(X)_{T}^{r}\right ]<\infty$ 
      and the optimal value is given for $x>0$ by
      \begin{align*}
        v(x)
        =
        \frac{1}{\EE_1[J(X)_T^{r}]}
        \mathbb{E}_1\!\left [
          \left (
            \left (x J(X)_{T}\right )^{r}-K\mathbb{E}_1\!\left [J(X)_{T}^{r}\right ]
          \right )^{+}
        \right ],
      \end{align*}
      where $J(X)_t = \inf\{ X_s: s\le t, X_s > 0 \}$.
    \item
      \label{i:full-3}
      If $r > \delta$ or $r < -(\delta+1-\alpha)$, then
      $v(x) = \infty$.
  \end{enumerate}
\end{theorem}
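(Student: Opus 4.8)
The plan is to reduce the optimal stopping problem \eqref{ogp4X} for $X$ started from a positive point to the auxiliary problem \eqref{callop} for $\xi$ solved in \autoref{t:main-lamperti}, and then to transport both the optimal rule and the value function back through the path-censoring time-change $C^{-1}$ and the Lamperti transform \eqref{lamperti}. Since $g(e^\xi) = (e^{r\xi}-K)^+$, the two gain functions agree under the exponential map, and the identity I aim to establish is
\begin{equation}
  v(x) = w(\log x), \qquad x > 0.
  \label{e:v-equals-w-plan}
\end{equation}
Everything else follows by reading off the explicit data from \autoref{t:main-lamperti} and the Wiener--Hopf factors of $\xi$.

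First I would establish \eqref{e:v-equals-w-plan} in cases (i) and (ii), where \autoref{l:equiv} supplies the integrability needed to invoke \autoref{t:os-theory}. That theorem shows an optimal stopping time for \eqref{ogp4X} is of the form $\tau_D$ with $D \subset (0,\infty)$, so it suffices to take the supremum in \eqref{ogp4X} over stopping times $\tau$ with $X_\tau > 0$. Exactly as in the passage \eqref{X2Y}--\eqref{Y2xi} of the proof of \autoref{l:equiv}, each such $\tau$ is carried by the censoring map $C^{-1}$ to a stopping time for $Y$ and then by \eqref{lamperti} to a stopping time $\sigma$ for $\xi$, with the gain preserved, $g(X_\tau)\Ind_{\{\tau<T\}} = g(e^{\xi_\sigma})$. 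This correspondence is a bijection between the relevant classes of $\mathbb{F}$- and $\mathbb{G}$-stopping times, and taking suprema on each side gives \eqref{e:v-equals-w-plan}.

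With \eqref{e:v-equals-w-plan} in hand I would read off the solution from \autoref{t:main-lamperti}. In case (i), the optimal rule $\sigma^* = \inf\{t : \xi_t \ge c^*\}$ corresponds, via $Y = e^\xi$ and the censoring, to the first passage of $X$ above $b^* = e^{c^*} = (K\kappa(q,0)/\kappa(q,-r))^{1/r}$, which is the claimed $\tau^*$ and simultaneously identifies $D = [b^*,\infty)$. For the value function, the supremum $\bar\xi_\zeta$ of $\xi$ over its exponential lifetime $\zeta$ of rate $q$ maps to the overall maximum of $Y$, which under $\PP_1$ equals $\bar X_T$ because censoring deletes only the negative excursions; hence $e^{\bar\xi_\zeta} \overset{d}{=} \bar X_T$ under $\PP_1$, so $\EE[e^{r\bar\xi_\zeta}] = \EE_1[\bar X_T^r] = \kappa(q,0)/\kappa(q,-r) < \infty$ and the expression for $w$ becomes the stated formula for $v(x)$. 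Case (ii) is entirely parallel, using the descending factor $\hat\kappa$, the infimum $\underline\xi_\zeta$ (which maps to $\underline Y_T$) and the region $D = (0,1/b^*]$.

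It remains to treat negative starting points and case (iii). Because the region $D$ identified above is a fixed subset of $(0,\infty)$, \autoref{t:os-theory} gives that $\tau_D = \tau^*$ is optimal for every $x \ne 0$, settling the optimal rule for $x<0$ (the explicit value formula being recorded only for $x>0$). For case (iii), \autoref{t:main-lamperti}(iii) gives $w(\log x) = \infty$; the one-sided inequality $v(x) \ge w(\log x)$, obtained by transporting any $\xi$-stopping time to an $X$-stopping time as in the second paragraph (a direction needing no integrability), forces $v(x) = \infty$ for $x>0$, and the strong Markov property at $\tau_0^+ = \inf\{t : X_t > 0\}$, with $\PP_x(\tau_0^+ < T) > 0$, extends this to $x<0$. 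The hard part throughout is the rigorous bijection of stopping times in the second paragraph: one must verify that composing $C^{-1}$ with the Lamperti time-change sends $\mathbb{G}$-stopping times to $\mathbb{F}$-stopping times and back, and that the reduction to $\{X_\tau > 0\}$ furnished by \autoref{t:os-theory} genuinely loses nothing, so that \eqref{e:v-equals-w-plan} is an equality and not merely two inequalities.
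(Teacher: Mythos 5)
Your overall architecture matches the paper's: reduce \eqref{ogp4X} to the auxiliary problem \eqref{callop} via the censoring and Lamperti transforms, invoke \autoref{t:os-theory} and \autoref{t:main-lamperti}, read off $b^*$, and handle $x<0$ and the explosive case by the strong Markov property at $\tau_0^+$. The identification $e^{\bar\xi_\zeta}\overset{d}{=}\bar X_T$ under $\PP_1$ (and its analogue for the infimum) is also correct and is a useful piece of detail.

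There is, however, a genuine gap at the central step, and it is precisely the one you defer at the end: the claimed bijection between stopping times $\tau$ of $X$ with $X_\tau>0$ and stopping times of $\xi$ is false, and no verification will repair it. The image of an $\mathbb{F}$-stopping time under the censoring and Lamperti time-changes is a stopping time for the filtration $\mathbb{F}\circ C^{-1}\circ T$, which is strictly richer than the natural filtration $\mathbb{G}$ of $\xi$: it records, for example, the duration and depth of the negative excursions of $X$ that the censoring erases (``the first time $X$ exceeds $1$ after having spent total time at least $5$ below zero'' satisfies $X_\tau>0$ but corresponds to no $\mathbb{G}$-stopping time). Consequently, restricting to $\{X_\tau>0\}$ and transporting stopping times only yields $v(x)\ge w(\log x)$; ``taking suprema on each side'' does not deliver the reverse inequality. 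The paper closes this differently: \autoref{t:os-theory} is used not merely to discard stopping times with $X_\tau\le 0$ but to assert that the supremum over the \emph{larger} class $\mathscr{S}_{\mathbb{F}\circ C^{-1}\circ T}$ is attained at the hitting time $\tau_D$ of a set $D\subset(0,\infty)$, whose image is a first-passage time of $\xi$ and hence \emph{does} lie in $\mathscr{S}_{\mathbb{G}}$; this collapses the two suprema and lets one match $D$ against the $\sigma^*$ of \autoref{t:main-lamperti}. Your argument needs this extra use of the hitting-time structure of the optimizer --- the correct proof is the two-inequality argument, not a bijection of stopping-time classes.
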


\begin{proof}
  Since $g(x)=0$ for $x\le 0$, it is never optimal to stop when $X$ is negative.
  The processes $X$ and $Y$ have the same range when
  restricted to $(0,\infty)$, 
  so the optimal stopping problem \eqref{e:os-Xdag} is equivalent
  to
  \begin{align*}
    v(x)
    =
    \sup_{\tau' \in \mathscr{S}_{\mathbb{F}^\dag\circ C^{-1}}}
    \mathbb{E}_{x}\left [g\left (Y_{\tau'}\right )\right ],
  \end{align*}
  where we recall 
  $\mathbb{F}^\dag\circ C^{-1} = \bigl( \mathscr{F}^\dag_{C^{-1}_t} \bigr)_{t\ge 0}$.
  Moreover, as already outlined, the pssMp $Y$ corresponds to $\xi$ under a time
  and space change, which implies that
  \begin{align}
    v(y)
    =
    \sup_{\tau''\in \mathscr{S}_{\mathbb{F}^\dag\circ C^{-1}\circ T}}
    \mathbb{E}_{y}\left [g\left (e^{\xi_{\tau''}}\right )\right ],\label{p4xi}
  \end{align}
  where $y=\log x$ and here again we have
  $\mathbb{F}^\dag\circ C^{-1}\circ T 
  = \bigl( \mathscr{F}^\dag_{C^{-1}_{T(t)}} \bigr)_{t\ge 0}$.

  We know from \autoref{t:os-theory} that the hitting time $\tau_{D}$ 
  of some set $D \subset (0,\infty)$ is optimal for
  \eqref{e:os-Xdag},
  and hence that the first passage time $\sigma_H$ of $\xi$
  into set $H = \{\log x: x \in D\}$ is optimal for
  \eqref{p4xi}.

  The solution in \autoref{t:main-lamperti} showed that the solution
  of the problem \eqref{callop} for $\xi$ is given by a hitting time
  $\sigma^*$.
    In \eqref{p4xi}, we optimise over $\mathscr{S}_{\mathbb{F}^\dag\circ C^{-1} \circ T}$,
  and in \eqref{callop} we optimise over $\mathscr{S}_{\mathbb{G}}$.
  The former set of stopping times is larger, since the filtration
  contains information about the times that $X^\dag$ spends below zero.
  Since $\sigma_{H} \in \mathscr{S}_{\mathbb{G}}$, it follows that it
  is also optimal for \eqref{callop}.
  Comparing the form of $\sigma^*$ with $\sigma_H$ and hence with
  the original stopping region $D$, we obtain that
  \[
    D = \left [ \left(K \frac{\kappa(q,0)}{\kappa(q,-r)}\right)^{1/r}, \infty \right )
    \text{ if } 0 < r<\delta,
  \]
  and
  \[
    D = \left (0, \left(
        \frac{\hat{\kappa}(q,-r)}{K\hat{\kappa}(q,0)}
    \right)^{1/\lvert r\rvert} \right ]
    \text{ if } -(\delta+1-\alpha)< r<0.
  \]
  For part \ref{i:full-3}, we turn to the corresponding part of
  \autoref{t:main-lamperti}, where it is shown that arbitrarily large
  values of $w$ can be obtained by stopping at a deterministic time,
  say $t_0$. Since time $t_0$ for $\xi$ corresponds to time
  $C^{-1}_{T(t_0)}$ for $X$, this is a viable time at which
  to stop $X$. It follows that $v(x) = \infty$ for $x>0$.
  When $x<0$, one can first wait for $X$ to pass above zero,
  which happens without being killed with positive probability, and then
  act as above, again attaining unbounded values.
\end{proof}

\begin{remark}
  \label{r:int}
  When $x>0$, the value function can be expressed as an integral by noting that
  $v(x) = w(\log x)$, where $w$ is the value function computed in
  \autoref{c:w}.

  When $x< 0$, it is still possible to obtain a (double) integral expression
  for $v$, though it is less direct.
  In this case, $X^\dag$ is started
  below zero and we should wait for it to either be killed (with
  probability $p$ independent of $x$) or jump back above zero (with probability
  $1-p$). Let $\tau_0^+ = \inf\{ t \ge 0: X^\dag_t \ge 0 \}$. Then,
  for all $x<0$,
  \begin{align*}
    v(x) &= \EE_x[v(X^\dag_{\tau_0^+})] \\
    &= (1-p) \int_0^\infty v(y) \frac{\sin \pi\alpha\rho}{\pi}
    (-x)^{\alpha\rho} y^{-\alpha\rho} (y-x)^{-1} \, \dd y,
  \end{align*}
  where the integral expression comes from the same
  result of \cite{Rog72} which we used in \autoref{l:equiv}.
  In conjunction with the expression for $w$
  given in \autoref{c:w},
  this can be used to write $v$ as a double integral suitable
  for numerical computation.
\end{remark}

\section{Remark on a variant stopping problem}
\label{s:variants}

In this section we briefly describe how a gain function akin to that of
a put option
requires a different analysis, despite the superficial similarities.
We consider the optimal stopping problem
\begin{align}
  v(x)=\sup_{\tau}\mathbb{E}_{x}\left [g(X_{\tau})\mathbbm{1}_{\{\tau<T\}}\right ],
  \qquad
  g(x) = (K-x)^+,\label{vput}
\end{align}
where $K\in \RR$.
Since $g(x)$ may be positive for $x<0$, it no longer makes sense to erase
the sojourns of $X$ in $(-\infty,0)$. Instead, we may describe the problem
using the so-called Lamperti-Kiu transform. This gives $X^\dag$
in terms of
a Markov additive process (MAP), which is a process $(\xi,J)$ on $\RR\times\{\pm 1\}$
obtained by
\[
  \xi_{t} 
  = \log \lvert X^\dag_{T(t)}\rvert 
  \text{ and } 
  J_{t}=\sgn X^\dag_{T(t)}, \quad t\geq 0,
\]
where $T$ is a time-change.

The process $(\xi,J)$ corresponding to $X^\dag$ will be killed at a rate $\omega(\xi_t,J_t)$,
where
\begin{equation*}
  \omega(y,j)
  =
  \begin{cases}
    0, &j=1, \\
    k, &j=-1,
  \end{cases}
\end{equation*}
and the problem \eqref{vput} will correspond to
\begin{equation*}
  v(x)
  =
  v(y,j)
  =
  \sup_{\sigma}
  \mathbb{E}_{y, j}\left [ g(\xi_{\sigma}, J_{\sigma}) \Ind_{\{t<\zeta\}} \right ],
\end{equation*}
where $(y,j)=(\log |x|,\sgn x)$, $\zeta$ is the killing time of $(\xi,J)$ and
\begin{equation*}
  g(y, j)
  =
  \begin{cases}
    \left (K-e^{y}\right )^{+} , &j=1, \\
    \left (K+e^{y}\right )^{+} , &j=-1.
  \end{cases}
\end{equation*}
The MAP $(\xi,J)$ can be described
explicitly in terms of its matrix exponent.
Unfortunately, though this translation to a MAP problem is relatively simple
to describe, two new issues arise. 
The first is that the presence of $J$-dependent killing
means that the matrix Wiener-Hopf factorisation of $(\xi,J)$, which is known
when $k=0$ \cite{Kyp-deep1,Kyp-deep2}, is no longer evident.
The second is that, even if the factorisation were known, the theory of optimal
stopping is much less developed for these processes, outside of the spectrally
negative case \cite{CKV}.

\paragraph{Acknowledgments}
A version of this work appears as part of the
doctoral thesis of the third author.
All authors would like to thank two anonymous referees, whose comments have
greatly improved the argumentation and presentation of this work.

\bibliographystyle{abbrvnat}
\bibliography{refs}

\begin{thebibliography}{27}
\providecommand{\natexlab}[1]{#1}
\providecommand{\url}[1]{\texttt{#1}}
\expandafter\ifx\csname urlstyle\endcsname\relax
  \providecommand{\doi}[1]{doi: #1}\else
  \providecommand{\doi}{doi: \begingroup \urlstyle{rm}\Url}\fi

\bibitem[Albrecher and Lautscham(2013)]{Al2}
H.~Albrecher and V.~Lautscham.
\newblock From ruin to bankruptcy for compound {P}oisson surplus processes.
\newblock \emph{Astin Bull.}, 43\penalty0 (2):\penalty0 213--243, 2013.
\newblock ISSN 0515-0361.
\newblock \doi{10.1017/asb.2013.4}.

\bibitem[Albrecher et~al.(2011)Albrecher, Gerber, and Shiu]{Al}
H.~Albrecher, H.~U. Gerber, and E.~S.~W. Shiu.
\newblock The optimal dividend barrier in the gamma-omega model.
\newblock \emph{Eur. Actuar. J.}, 1\penalty0 (1):\penalty0 43--55, 2011.
\newblock ISSN 2190-9733.
\newblock \doi{10.1007/s13385-011-0006-4}.

\bibitem[Bichteler(2002)]{Bic}
K.~Bichteler.
\newblock \emph{Stochastic Integration with Jumps}, volume~89 of
  \emph{Encyclopedia of Mathematics and its Applications}.
\newblock Cambridge University Press, Cambridge, 2002.
\newblock ISBN 0-521-81129-5.
\newblock \doi{10.1017/CBO9780511549878}.

\bibitem[Blumenthal and Getoor(1968)]{BG-mp}
R.~M. Blumenthal and R.~K. Getoor.
\newblock \emph{Markov Processes and Potential Theory}, volume Vol. 29 of
  \emph{Pure and Applied Mathematics}.
\newblock Academic Press, New York-London, 1968.

\bibitem[Bogdan et~al.(2003)Bogdan, Burdzy, and Chen]{BBC-cens}
K.~Bogdan, K.~Burdzy, and Z.-Q. Chen.
\newblock Censored stable processes.
\newblock \emph{Probab. Theory Related Fields}, 127\penalty0 (1):\penalty0
  89--152, 2003.
\newblock ISSN 0178-8051.
\newblock \doi{10.1007/s00440-003-0275-1}.

\bibitem[Budd(2018)]{Budd}
T.~Budd.
\newblock The peeling process on random planar maps coupled to an {$O(n)$} loop
  model (with an appendix by {L}inxiao {C}hen).
\newblock \emph{Preprint (arXiv)}, 2018.
\newblock \doi{10.48550/arXiv.1809.02012}.

\bibitem[Caballero and Chaumont(2006)]{CC}
M.~E. Caballero and L.~Chaumont.
\newblock Conditioned stable {L}\'{e}vy processes and the {L}amperti
  representation.
\newblock \emph{J. Appl. Probab.}, 43\penalty0 (4):\penalty0 967--983, 2006.
\newblock ISSN 0021-9002.
\newblock \doi{10.1239/jap/1165505201}.

\bibitem[Caballero et~al.(2011)Caballero, Pardo, and P\'{e}rez]{CPP-explicit}
M.~E. Caballero, J.~C. Pardo, and J.~L. P\'{e}rez.
\newblock Explicit identities for {L}\'{e}vy processes associated to symmetric
  stable processes.
\newblock \emph{Bernoulli}, 17\penalty0 (1):\penalty0 34--59, 2011.
\newblock ISSN 1350-7265.
\newblock \doi{10.3150/10-BEJ275}.

\bibitem[\c{C}a\u{g}lar et~al.(2022)\c{C}a\u{g}lar, Kyprianou, and
  Vardar-Acar]{CKV}
M.~\c{C}a\u{g}lar, A.~Kyprianou, and C.~Vardar-Acar.
\newblock An optimal stopping problem for spectrally negative {M}arkov additive
  processes.
\newblock \emph{Stochastic Process. Appl.}, 150:\penalty0 1109--1138, 2022.
\newblock ISSN 0304-4149,1879-209X.
\newblock \doi{10.1016/j.spa.2021.06.010}.

\bibitem[Czarna et~al.(2020)Czarna, Kaszubowski, Li, and Palmowski]{Cza}
I.~Czarna, A.~Kaszubowski, S.~Li, and Z.~Palmowski.
\newblock Fluctuation identities for omega-killed spectrally negative {M}arkov
  additive processes and dividend problem.
\newblock \emph{Adv. in Appl. Probab.}, 52\penalty0 (2):\penalty0 404--432,
  2020.
\newblock ISSN 0001-8678.
\newblock \doi{10.1017/apr.2020.2}.

\bibitem[Gerber et~al.(2012)Gerber, Shiu, and Yang]{G}
H.~U. Gerber, E.~S.~W. Shiu, and H.~Yang.
\newblock The {O}mega model: from bankruptcy to occupation times in the red.
\newblock \emph{Eur. Actuar. J.}, 2\penalty0 (2):\penalty0 259--272, 2012.
\newblock ISSN 2190-9733.
\newblock \doi{10.1007/s13385-012-0052-6}.

\bibitem[Kaszubowski(2019)]{Ka}
A.~Kaszubowski.
\newblock Omega bankruptcy for different l{\'e}vy models.
\newblock \emph{{\'S}l{\k{a}}ski Przegl{\k{a}}d Statystyczny}, \penalty0 (17
  (23)):\penalty0 31--58, 2019.

\bibitem[K\"{u}hn and van Schaik(2008)]{Kuhn}
C.~K\"{u}hn and K.~van Schaik.
\newblock Perpetual convertible bonds with credit risk.
\newblock \emph{Stochastics}, 80\penalty0 (6):\penalty0 585--610, 2008.
\newblock ISSN 1744-2508.
\newblock \doi{10.1080/17442500802263888}.

\bibitem[Kuznetsov and Pardo(2013)]{KP-hg}
A.~Kuznetsov and J.~C. Pardo.
\newblock Fluctuations of stable processes and exponential functionals of
  hypergeometric {L}\'{e}vy processes.
\newblock \emph{Acta Appl. Math.}, 123:\penalty0 113--139, 2013.
\newblock ISSN 0167-8019.
\newblock \doi{10.1007/s10440-012-9718-y}.

\bibitem[Kyprianou(2014)]{Kyprianou}
A.~E. Kyprianou.
\newblock \emph{Fluctuations of {L}\'{e}vy Processes with Applications}.
\newblock Universitext. Springer, Heidelberg, second edition, 2014.
\newblock ISBN 978-3-642-37631-3; 978-3-642-37632-0.
\newblock \doi{10.1007/978-3-642-37632-0}.
\newblock Introductory lectures.

\bibitem[Kyprianou(2016)]{Kyp-deep1}
A.~E. Kyprianou.
\newblock Deep factorisation of the stable process.
\newblock \emph{Electron. J. Probab.}, 21:\penalty0 Paper No. 23, 28, 2016.
\newblock \doi{10.1214/16-EJP4506}.

\bibitem[Kyprianou et~al.(2010)Kyprianou, Pardo, and Rivero]{Kyp}
A.~E. Kyprianou, J.~C. Pardo, and V.~Rivero.
\newblock Exact and asymptotic {$n$}-tuple laws at first and last passage.
\newblock \emph{Ann. Appl. Probab.}, 20\penalty0 (2):\penalty0 522--564, 2010.
\newblock ISSN 1050-5164.
\newblock \doi{10.1214/09-AAP626}.

\bibitem[Kyprianou et~al.(2014)Kyprianou, Pardo, and Watson]{KPW}
A.~E. Kyprianou, J.~C. Pardo, and A.~R. Watson.
\newblock Hitting distributions of {$\alpha$}-stable processes via path
  censoring and self-similarity.
\newblock \emph{Ann. Probab.}, 42\penalty0 (1):\penalty0 398--430, 2014.
\newblock ISSN 0091-1798.
\newblock \doi{10.1214/12-AOP790}.

\bibitem[Kyprianou et~al.(2018)Kyprianou, Rivero, and
  \c{S}eng\"{u}l]{Kyp-deep2}
A.~E. Kyprianou, V.~Rivero, and B.~\c{S}eng\"{u}l.
\newblock Deep factorisation of the stable process {II}: {P}otentials and
  applications.
\newblock \emph{Ann. Inst. Henri Poincar\'{e} Probab. Stat.}, 54\penalty0
  (1):\penalty0 343--362, 2018.
\newblock ISSN 0246-0203.
\newblock \doi{10.1214/16-AIHP806}.

\bibitem[Kyprianou et~al.(2021)Kyprianou, Pardo, and Vidmar]{DHG}
A.~E. Kyprianou, J.~C. Pardo, and M.~Vidmar.
\newblock Double hypergeometric {L}\'{e}vy processes and self-similarity.
\newblock \emph{J. Appl. Probab.}, 58\penalty0 (1):\penalty0 254--273, 2021.
\newblock ISSN 0021-9002.
\newblock \doi{10.1017/jpr.2020.86}.

\bibitem[Lamperti(1972)]{Lam}
J.~Lamperti.
\newblock Semi-stable {M}arkov processes. {I}.
\newblock \emph{Z. Wahrscheinlichkeitstheorie und Verw. Gebiete}, 22:\penalty0
  205--225, 1972.
\newblock \doi{10.1007/BF00536091}.

\bibitem[Li and Palmowski(2018)]{Li}
B.~Li and Z.~Palmowski.
\newblock Fluctuations of omega-killed spectrally negative {L}\'{e}vy
  processes.
\newblock \emph{Stochastic Process. Appl.}, 128\penalty0 (10):\penalty0
  3273--3299, 2018.
\newblock ISSN 0304-4149.
\newblock \doi{10.1016/j.spa.2017.10.018}.

\bibitem[Mordecki(2002)]{ME}
E.~Mordecki.
\newblock Optimal stopping and perpetual options for {L}\'{e}vy processes.
\newblock \emph{Finance Stoch.}, 6\penalty0 (4):\penalty0 473--493, 2002.
\newblock ISSN 0949-2984.
\newblock \doi{10.1007/s007800200070}.

\bibitem[Peskir and Shiryaev(2006)]{GP}
G.~Peskir and A.~Shiryaev.
\newblock \emph{Optimal Stopping and Free-Boundary Problems}.
\newblock Lectures in Mathematics ETH Z\"{u}rich. Birkh\"{a}user Verlag, Basel,
  2006.
\newblock ISBN 978-3-7643-2419-3; 3-7643-2419-8.

\bibitem[Rodosthenous and Zhang(2018)]{ZH}
N.~Rodosthenous and H.~Zhang.
\newblock Beating the omega clock: an optimal stopping problem with random
  time-horizon under spectrally negative {L}\'{e}vy models.
\newblock \emph{Ann. Appl. Probab.}, 28\penalty0 (4):\penalty0 2105--2140,
  2018.
\newblock ISSN 1050-5164.
\newblock \doi{10.1214/17-AAP1322}.

\bibitem[Rogozin(1973)]{Rog72}
B.~A. Rogozin.
\newblock The distribution of the first hit for stable and asymptotically
  stable walks on an interval.
\newblock \emph{Theory Probab. Appl.}, 17\penalty0 (2):\penalty0 332--338,
  1973.
\newblock \doi{10.1137/1117035}.

\bibitem[Watson(2013)]{Alex}
A.~R. Watson.
\newblock \emph{Stable processes}.
\newblock PhD thesis, University of Bath, 2013.

\end{thebibliography}

\end{document}